\newtheorem{theorem}{Theorem}[section]
\newtheorem{proposition}[theorem]{Proposition}
\newtheorem{lemma}[theorem]{Lemma}
\newtheorem{question}{Question}
\theoremstyle{definition}
\newtheorem{main}{Theorem}
\newtheorem{main_cor}[main]{Corollary}
\def\Z{\mathbb{Z} }
\def\R{\mathbb{R} }
\def\A{\mathbb{A} }
\def\nbd{neighborhood }
\def\nbds{neighborhoods }
\def\Sv{\mathop{\mathrm{Sing}}(v)}
\def\Pv{\mathop{\mathrm{Per}}(v)}
\def\Cv{\mathop{\mathrm{Cl}}(v)}
\author{Tomoo Yokoyama}
\date{\today}
\address{Applied Mathematics and Physics Division, Gifu University, Yanagido 1-1, Gifu, 501-1193, Japan\\}
\email{tomoo@gifu-u.ac.jp}
\thanks{The author was partially supported by JSPS Grant Number 20K03583 and 21H00980}
\subjclass[2010]{}
\title[Hamiltonian flows on unbounded surfaces]{Topological characterizations of Hamiltonian flows on unbounded surfaces}
\keywords{Hamiltonian flows; Unbounded surfaces}
\subjclass[2010]{Primary 37E35; Secondary 37J46,58K45}
\begin{document}
\maketitle

\begin{abstract}
Hamiltonian flows on compact surfaces are characterized, and the topological invariants of such flows with finitely many singular points are constructed from the viewpoints of integrable systems, fluid mechanics, and dynamical systems. Though various fluid phenomena are modeled as flows on the plane, it is not obvious to determine if the flows are Hamiltonian, even the singular point set is totally disconnected and every orbit is contained in a straight line parallel to the $x$-axis. In fact, there are such non-Hamiltonian flows on the plane. On the other hand, this paper topologically characterizes Hamiltonian flows on unbounded surfaces and constructs their complete invariant under a regularity condition for singular points. In addition, under finite volume assumption, Hamiltonian flows on unbounded surfaces can be embedded into those on compact surfaces. 
\end{abstract}

\section{Introduction}\label{intro}

The topological complete invariants of Hamiltonian flows with finitely many singular points on compact surfaces are constructed from integrable system points of views \cite{bolsinov1994classification,bolsinov1999exact,kruglikov1997exact,Oshemkov10} and fluid mechanics points of views \cite{aref1998stagnation,kidambi2000streamline,moffatt2001topology}. 
Moreover, the topological complete invariant is generalized into one of the Hamiltonian flows with finitely sectored  singular points without elliptic sectors on non-compact surfaces \cite{Nikolaenko20}. 
To analyze various fluid phenomena, the studies need be generalized into those for non-compact surfaces because uniform flows on planes have elliptic sectors at the ends. 
From dynamical system points of view \cite{ma2005geometric,sakajo2018tree,yokoyama2013word,yokoyama2021complete,yokoyamacot},
the structural stability of Hamiltonian flows on compact surfaces and non-compact punctured spheres are characterized, and their topological complete invariants are constructed. 
In addition, the generic intermediate flows between structurally stable Hamiltonian flows on compact surfaces and non-compact punctured spheres are also characterized, and their topological complete invariants are constructed \cite{sakajo2015transitions}. 
The higher combinatorial structures of the space of Hamiltonian vector fields with finitely many singular points on compact surfaces are described \cite{yokoyama2021combinatorial}. 
Furthermore, Hamiltonian flows with finitely many singular points on compact surfaces are topologically characterized using the non-wandering property \cite{yokoyama2021ham}. 
In fact, the difference and equivalence among Hamiltonian, area-preserving, and non-wandering properties for flows with finitely many singular points on compact surfaces are characterized topologically. 
On the other hand, 
``adding operations of totally disconnected singular points'' can break the Hamiltonian property for flows \cite{yokoyama2022omega}. 
In fact, there is a non-Hamiltonian flow on the plane generated by a vector field $fX$ with totally disconnected singular points, where $X =(1,0)$ is a Hamiltonian vector field and $f \colon \R^2 \to \R$ is a smooth function with totally disconnected zeros (see Proposition~\ref{prop:ex}). 

In this paper, Hamiltonian flows on unbounded surfaces are characterized topologically. 
In fact, under a regularity of singular points, we show that any flows on orientable surfaces with finitely many boundary components, finite genus, and finite ends are Hamiltonian if and only if they are flows without limit circuits or non-closed recurrent points such that the extended orbit spaces are finite directed graphs without directed cycles. 
Furthermore, the directed surface graph which is the finite union of centers, multi-saddles, and virtually border separatrices of such a Hamiltonian flow is a topological complete invariant. 
On the other hand, under finite volume assumption, any Hamiltonian flows with finitely many singular points on an orientable surface with finitely many boundary components, finite genus, and finite ends can be embedded in Hamiltonian flows on compact surfaces. 

\subsection{Statements of main results}
To state more precisely, we define some concepts. 
A {\bf parabolic sector} is topologically equivalent to a flow box with the point $(\pm \infty, 0)$ and a {\bf hyperbolic} (resp. {\bf elliptic}) {\bf sector} is topologically equivalent to a Reeb component (resp. the interior of a Reeb component) with the point $(\infty, 0)$ (resp. $(- \infty, 0)$) as in Figure~\ref{sectors_all}. 
\begin{figure}[t]
\begin{center}
\includegraphics[scale=0.15]{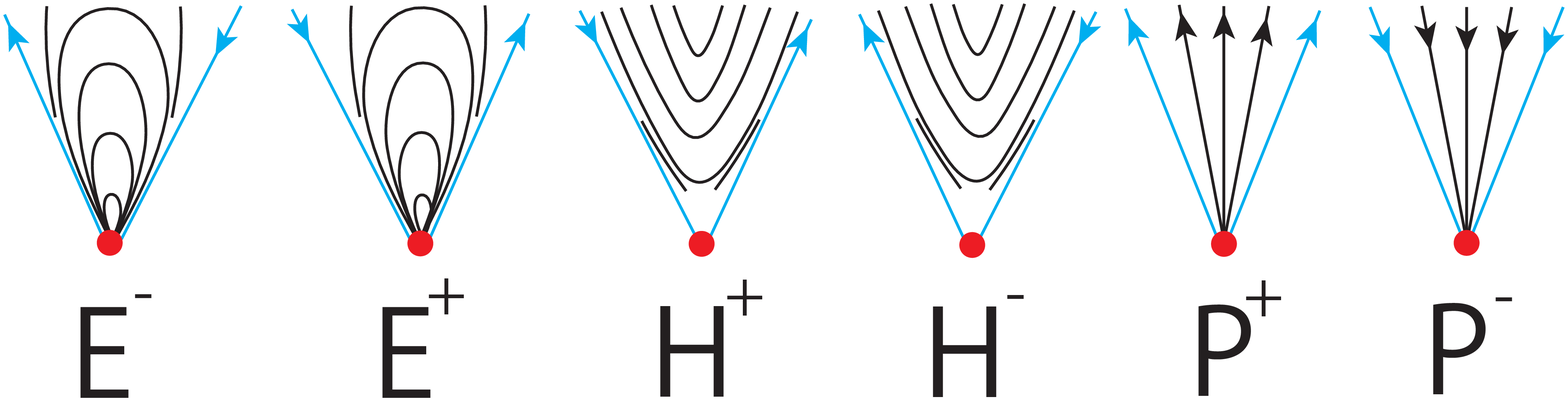}
\end{center}
\caption{Two parabolic sectors $P^-$ and $P^+$, two hyperbolic sectors $H^-$ and $H^+$ with clockwise and anti-clockwise orbit directions, and two elliptic sectors $E^+$ and $E^-$ with clockwise and anti-clockwise orbit directions respectively.}
\label{sectors_all}
\end{figure}
A singular point $x$ is {\bf finitely sectored} if either a center or there is an open neighborhood of $x$ which is an open disk and is a finite union of $\{ x \}$, parabolic sectors, hyperbolic sectors, and elliptic sectors such that each pair of distinct sectors intersects at most two orbit arcs. 
Note that a flow with finitely sectored singular points has at most finitely many singular points. 
A finitely sectored singular point is a {\bf multi-saddle} if any sectors are hyperbolic. 
The union of multi-saddles and separatrices from or to multi-saddles is called the {\bf multi-saddle connection diagram}. 
Any connected components of the multi-saddle connection diagram are called multi-saddle connections. 
A flow is of {\bf weakly finite type} if any singular points are finitely sectored, there are no non-closed recurrent points, and there are at most finitely many limit cycles.
We call that a flow $v$ on a surface $S$ with finitely many genus and ends {\bf has sectored ends} if any points in the set difference $S_{\mathrm{end}} - S$ are finitely sectored singular points on the end completion  $S_{\mathrm{end}}$. 

A non-singular point $x$ is a {\bf non-Hausdorff point} if there is a non-singular point $y \in S$ with $y \notin \overline{O(x)}$ and $x \notin \overline{O(y)}$ such that there are no invariant disjoint \nbds $U_x$ and $U_y$ of $x$ and $y$ respectively. 
Then the pair $x$ and $y$ is called a {\bf non-Hausdorff pair}. 
Define an equivalence relation $\sim_{\mathrm{nH}}$ on $S$ as follows: 
$x \sim_{\mathrm{nH}} y$ if either $O(x) = O(y)$ or the pair of $x$ and $y$ is a non-Hausdorff pair. 
For any non-Hausdorff point $x$, denote by $[x]_{\mathrm{nH}}$ the equivalent class of $x$.
For a flow $v$ of weakly finite type on a surface $S$ with finitely many boundary components, finite genus, and finite ends, the {\bf extended orbit} $O_{\mathrm{ex}}(x)$ of any non-Hausdorff point $x$ is defined as follows: 
\[
O_{\mathrm{ex}}(x) := [x]_{\mathrm{nH}} \cup \left( \bigcup_{y \in [x]_{\mathrm{nH}}} \alpha(y) \cup \omega(y) \right) \subset S
\]
For any point in the complement of the union of extended orbits of non-Hausdorff points, its extended orbit is defined as its orbit. 
Define an equivalence relation $\sim_{\mathrm{ex}}$ on $S$ as follows: 
$x \sim_{\mathrm{ex}} y$ if there is an extended orbit that contains $x$ and $y$. 
Then the quotient space $S/\sim_{\mathrm{ex}}$ is called the {\bf extended orbit space} and is denoted by $\bm{S/v_{\mathrm{ex}}}$. 
We have the following topological characterization of a Hamiltonian flow on a surface of ``compact type''. 

\begin{main}\label{main:01}
The following are equivalent for a flow $v$ with sectored ends and finitely many singular points on an orientable surface $S$ with finitely many boundary components, finite genus, and finite ends: 
\\
{\rm(1)} The flow $v$ is Hamiltonian. 
\\
{\rm(2)} The flow $v$ is a flow of weakly finite type and the extended orbit space $S/v_{\mathrm{ex}}$ is a finite directed graph without directed cycles.  

In any case, the following statement holds: 
\\
{\rm(a)} Each non-Hausdorff point in $S/v$ is either a multi-saddle or a virtually border separatrix. 
\\
{\rm(b)} The extended orbit space is the quotient space of the finite Reeb graph of its Hamiltonian by identifying non-Hausdorff points. 
\\
{\rm(c)} We can choose the graph structure of $S/v_{\mathrm{ex}}$ each of whose vertices is either a center, a multi-saddle connection, or a finite union of virtually border separatrices, and each of whose edges is either an invariant periodic annulus or an invariant trivial flow box. 
\end{main}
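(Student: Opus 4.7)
The plan is to prove the two implications $(1)\Rightarrow(2)$ and $(2)\Rightarrow(1)$ separately, and then read off the structural statements (a)--(c) from the intermediate constructions together with a local case analysis of the sector types appearing in Figure~\ref{sectors_all}.

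For $(1)\Rightarrow(2)$, I would begin with a Hamiltonian function $H$ generating $v$ and use area preservation to rule out both non-closed recurrent points and limit cycles. Since orbits of a Hamiltonian flow lie in level sets of the single-valued function $H$, a non-closed recurrent orbit would fill a positive-area subset of a single level set, which is excluded once one knows the critical set of $H$ is locally finite (guaranteed by the finite number of singular points and the sectored-ends hypothesis). The same level-set description rules out limit cycles, so $v$ is of weakly finite type. For the combinatorics of $S/v_{\mathrm{ex}}$, I would use $H$ as a ``height function'' on the quotient: each edge sits inside a monotone arc of $H$-values, so the natural orientation on $S/v_{\mathrm{ex}}$ is acyclic. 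Finiteness of the graph follows from the fact that the Reeb graph of a proper function with finitely many critical values on a surface of finite topological type is finite, and statement (b) is then exactly the observation that $S/v_{\mathrm{ex}}$ is the further quotient of that Reeb graph by identifying non-Hausdorff points.

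For $(2)\Rightarrow(1)$, acyclicity of $S/v_{\mathrm{ex}}$ gives a topological ordering of the vertices, and I would construct $H$ by assigning consistent values to the vertices according to this ordering and then interpolating monotonically along each edge using the edge's prescribed structure: on a periodic-annulus edge, interpolate between the two boundary vertex-values; on a trivial-flow-box edge, use the transversal coordinate. Across each vertex I would use the local sector model: a center gets a local extremum, a multi-saddle connection is locally modeled on a branched cover of a standard saddle together with an $H$ realizing all hyperbolic sectors, and a virtually border separatrix fits into a half-disk flow box. Consistency where edges meet vertices is guaranteed precisely by acyclicity, and standard smoothing along the seams produces a smooth $H$ whose Hamiltonian vector field has the same orbit structure as $v$; a rescaling argument then promotes this topological equivalence to $v$ being Hamiltonian.

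Statement (a) follows from the sector analysis used in $(1)\Rightarrow(2)$: a non-Hausdorff pair of non-singular orbits must share a singular accumulation point without bounding a common sector, and case-checking the six sector types in Figure~\ref{sectors_all} under weak finite type shows that the offending orbits either issue from a multi-saddle or form virtually border separatrices. Statement (c) is a restatement of the construction in $(2)\Rightarrow(1)$ applied to the graph produced in $(1)\Rightarrow(2)$. The principal obstacle is the noncompact $(1)\Rightarrow(2)$ direction, specifically showing $S/v_{\mathrm{ex}}$ is \emph{finite}: on an unbounded surface, level sets of $H$ can in principle accumulate at ends or on the boundaries of singular levels in combinatorially wild ways. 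The sectored-ends hypothesis is exactly what prevents infinitely many sectors from stacking up at an end, and the finite-genus, finite-ends, and finite-boundary hypotheses on $S$ combine with local finiteness of the singular set to bound the number of edges of the Reeb graph; transferring this bound to the further quotient $S/v_{\mathrm{ex}}$ is where the technical core of the argument lies.
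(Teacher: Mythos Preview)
Your overall architecture matches the paper in spirit for the $(2)\Rightarrow(1)$ direction: the paper (inside Lemma~\ref{lem:02}) also builds $H$ by putting a height function on the acyclic directed graph and then interpolating over the flow-box and periodic-annulus pieces, with explicit bump-function smoothing along the seams. The key structural difference is that the paper does not work directly on the non-compact surface $S$. Instead it passes to the end completion $S_{\mathrm{end}}$, which under the hypotheses is a \emph{compact} surface; the sectored-ends assumption makes $v_{\mathrm{end}}$ a flow with finitely sectored singular points, and then the compact-surface lemmas (Lemma~\ref{lem:bd1} and Lemma~\ref{lem:02}) do all the work. Statements (a)--(c) fall out immediately from the finite decomposition of $S_{\mathrm{end}} - \mathop{\mathrm{BD}}_+(v_{\mathrm{end}})$ rather than from a sector-by-sector case analysis on $S$.

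There is a genuine gap in your $(1)\Rightarrow(2)$ argument, precisely at the point you flag as the principal obstacle. Your mechanism for finiteness is that ``the Reeb graph of a proper function with finitely many critical values on a surface of finite topological type is finite,'' but the Hamiltonian $H$ need not be proper: already for $X=(1,0)$ on $\R^2$ with $H(x,y)=y$, the function is not proper, and this example satisfies all of the theorem's hypotheses. So no properness-based bound on the Reeb graph is available, and the sentence ``sectored ends prevents infinite stacking'' is an assertion rather than an argument. The paper sidesteps this entirely: on the compact surface $S_{\mathrm{end}}$, Lemma~\ref{lem:bd1} gives that $\mathop{\mathrm{BD}}_+(v_{\mathrm{end}})$ consists of finitely many orbits, and finiteness of the graph is then automatic. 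A second, smaller issue: invoking ``area preservation'' to exclude limit cycles and non-closed recurrence is misleading on an unbounded surface, where the invariant volume may be infinite (this is exactly the distinction the paper draws in Theorem~\ref{main:02}). The correct argument, which you also hint at, is that a Hamiltonian forbids closed transversals; the paper combines this with Lemma~\ref{lem:char_isolated_sing} (isolated singular points of a Hamiltonian flow are centers or multi-saddles) and Poincar\'e--Bendixson on $S_{\mathrm{end}}$ to conclude $v_{\mathrm{end}}$ is of weakly finite type.
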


Roughly speaking, a virtually border separatrix is a boundary component of ``maximal invariant open periodic annulus" or ``maximal invariant open trivial flow box" (see definition in \S 2 for details).
The topological characterization of a Hamiltonian flow with sectored ends on an orientable surface of ``compact type'' is described in \S 3 (see Theorem~\ref{thm:02} for details). 
To demonstrate the following characterization, we consider the following question. 
\begin{question}
What kinds of isolated singular points do appear in Hamiltonian flows on {\rm(}possibly non-compact{\rm)} surfaces?
\end{question}

For the same question for gradient flows on surfaces (resp. non-wandering flows on compact surfaces), every isolated singular point of such a flow is a finitely sectored singular point without elliptic sectors which is not a center \cite[Theorem~A]{kibkalo2021topological} (resp. either a center or a multi-saddle \cite[Theorem 3]{cobo2010flows}).
We observe a characterization of isolated singular points for Hamiltonian flows on surfaces (see Lemma~\ref{lem:char_isolated_sing} for details). 

A {\bf directed surface graph} is an embedding of a finite directed multi-graph into a surface. 
The previous theorem implies the topological complete invariance of the finite union of centers, multi-saddles, and virtually border separatrices as follows. 

\begin{main_cor}\label{main_cor:01}
The directed surface graph which is the finite union of centers, multi-saddles, and virtually border separatrices of a Hamiltonian flow with sectored ends and finitely many singular points on an orientable surface with finitely many boundary components, finite genus, and finite ends is a topological complete invariant. 
\end{main_cor}

Though any Hamiltonian flows on compact surfaces are non-wandering, notice that a Hamiltonian flow on non-compact surfaces need not have non-wandering points. 
Indeed, a flow generated by a vector field $X = (1,0)$ on $\R^2$ is a Hamiltonian, but and any points are wandering. 
Thus Hamiltonian flows on compact surfaces and those on non-compact surfaces are quite different from each other. 
On the other hand, if Hamiltonian flows on non-compact surfaces preserve finite volumes, then they are non-wandering. 
Therefore we consider how different Hamiltonian flows on unbounded surfaces under finite volume conditions and not are, which is suggested by M. Shishikura. 
Under finite volume conditions, we show that such Hamiltonian flows are reduced to those on compact surfaces. 
More precisely, for a Hamiltonian vector field $X$ defined by $dH= \omega(X, \cdot )$ on a surface, the vector field $X$ is a Hamiltonian vector field on the surface with {\bf finite volume} if it the volume of the surface with respect to the volume form $\omega$ is finite.  
A flow is a Hamiltonian flow on a surface with {\bf finite volume} if it is topological equivalent to a flow generated by a Hamiltonian vector field on a surface with finite volume. 
We have the following characterization of a Hamiltonian vector field on a surface with finite volume.

\begin{main}\label{main:02}
The following are equivalent for a flow $v$ with finitely many singular points on an orientable surface $S$ with finitely many boundary components, finite genus, and finite ends: 
\\
{\rm(1)} The flow $v$ is a Hamiltonian flow on $S$ with finite volume. 
\\
{\rm(2)} The flow $v_{\mathrm{end}}$ is a Hamiltonian flow on the compact surface $S_{\mathrm{end}}$.  
%
\end{main}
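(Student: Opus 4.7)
The direction $(2)\Rightarrow(1)$ is immediate: given a Hamiltonian structure $(H,\omega)$ on the compact surface $S_{\mathrm{end}}$ realizing $v_{\mathrm{end}}$, compactness forces $\int_{S_{\mathrm{end}}}\omega<\infty$, and removing the finitely many added end points (each a set of $\omega$-measure zero) leaves the Hamiltonian structure intact on the open subset $S \subset S_{\mathrm{end}}$ with the same finite total volume, realizing $v$ as a finite-volume Hamiltonian flow.

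For $(1)\Rightarrow(2)$, I would proceed as follows. First, the Hamiltonian structure turns the preserved volume form into a finite invariant measure on $S$, so Poincar\'e recurrence gives that $v$ is non-wandering on $S$; combined with the Hamiltonian property this rules out non-closed recurrent points and limit cycles. Second, for each end $e_i\in S_{\mathrm{end}}\setminus S$, I would analyze a shrinking sequence of punctured-disk neighborhoods of $e_i$ in $S_{\mathrm{end}}$: since the finitely many singular points of $v$ yield only finitely many critical values of $H$, and the $\omega$-volume of these shrinking neighborhoods tends to zero, one shows (after a modification within the topological equivalence class) that $H$ has a continuous limit at $e_i$ and that the local orbit structure near $e_i$ decomposes into finitely many hyperbolic, parabolic, or elliptic sectors, so that $e_i$ becomes a finitely sectored singular point of $v_{\mathrm{end}}$. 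The key observation ruling out wild behavior is that persistent oscillation of $H$ between two distinct values near $e_i$ would trap nested regions of positive $\omega$-area accumulating arbitrarily close to $e_i$, contradicting finiteness of the total volume together with the area-preservation of the flow.

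Once the sectored-end property is established, the extension $v_{\mathrm{end}}$ is a flow on the compact surface $S_{\mathrm{end}}$ with finitely many finitely sectored singular points; it is non-wandering and inherits from $v$ the absence of limit cycles and non-closed recurrent orbits. Applying the topological characterization of Hamiltonian flows on compact surfaces from \cite{yokoyama2021ham} (equivalently, Theorem~\ref{thm:02} specialized to the compact setting where the sectored-end hypothesis is vacuous) then yields that $v_{\mathrm{end}}$ is Hamiltonian on $S_{\mathrm{end}}$, completing the argument. The main obstacle is the end-analysis in the second step: one must leverage the finite-volume hypothesis to upgrade a priori irregular limit behavior of the Hamiltonian and its orbit structure at each end into the finitely sectored combinatorial structure required to invoke the characterization on the compact surface $S_{\mathrm{end}}$.
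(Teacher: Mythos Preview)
Your direction $(2)\Rightarrow(1)$ matches the paper's. For $(1)\Rightarrow(2)$ the strategies diverge, and your key step has a genuine gap.

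You propose to analyze each end $e_i$ locally, arguing that finite volume forces $H$ to have a continuous limit there and that the orbit structure organizes into finitely many sectors. Your heuristic---that oscillation of $H$ between two values $a<b$ near $e_i$ would trap infinitely many regions of positive area---does not close: nothing prevents those regions from having summable areas (think of a sequence of nested annuli of area $2^{-n}$). Passing from ``$H$ does not oscillate'' to ``$e_i$ is finitely sectored'' is also a nontrivial step you leave unaddressed. This local end-analysis is exactly the difficulty the paper is designed to avoid.

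The paper's route is global and sidesteps the end-analysis entirely. Using finite volume it shows $\mathrm{int}\,\mathrm{P}(v)=\emptyset$ (any wandering domain would contradict the invariant finite measure, and a non-wandering non-closed point returning along a transverse arc would be forced periodic by the Hamiltonian), while the absence of closed transversals makes $\mathrm{Per}(v)$ open; hence $\overline{\mathrm{Per}(v)}=S$. Passing to $S_{\mathrm{end}}$, the flow $v_{\mathrm{end}}$ is then a non-wandering flow with finitely many singular points on a \emph{compact} surface, and now \cite[Theorem~3]{cobo2010flows} yields for free that \emph{every} singular point of $v_{\mathrm{end}}$---in particular every end---is a center or a multi-saddle. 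No local sector-counting is needed. The paper then finishes by verifying that the extended orbit spaces of $v$ and $v_{\mathrm{end}}$ coincide, applies Theorem~\ref{main:01} to $v$ to get the finite acyclic directed graph, and invokes Lemma~\ref{lem:02} on $v_{\mathrm{end}}$.

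In short: you correctly identify non-wandering via Poincar\'e recurrence, but then take the hard road of proving sectored ends directly; the paper instead upgrades non-wandering to density of periodic points, pushes this to the compactification, and lets the Cobo--Gutierrez--Llibre theorem deliver the sectored structure at the ends automatically.
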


The present paper consists of four sections.
In the next section, as preliminaries, we introduce fundamental concepts.
In \S 3, we topologically characterize Hamiltonian flows on unbounded surfaces of ``compact type''. 
The characterization implies the topological complete invariance of the finite union of centers, multi-saddles, and their separatrices as a directed surface graph. 
Contrary, under finite volume assumption, any Hamiltonian flows on unbounded surfaces of ``compact type'' can be embedded in Hamiltonian flows on compact surfaces. 
Moreover, we characterize a Hamiltonian flow with sectored ends. 
In the final section, we demonstrate the necessity of degeneracy of connected components of the singular point set, isolated properties of singular points, and non-existence of parabolic sectors for characterization of Hamiltonian flows. 

\section{Preliminaries}\label{sec:prel}

\subsection{Notion of topology and combinatrics}
%
%

By a {\bf surface}, we mean a paracompact two-dimensional manifold.
A {\bf Reeb graph} of a smooth function $f \colon S \to \R$ is the quotient space $S /\sim$ endowed with the quotient topology, where $x \sim y$ if there are a number $c \in \R$ and a connected component of $f^{-1}(c)$ containing $x$ and $y$.

\subsubsection{Curves and arc}

A {\bf curve} (or {\bf arc}) on a surface $S$ is a continuous mapping $C: I \to S$ where $I$ is a non-degenerate connected subset of a circle $\mathbb{S}^1$.
An orbit arc is an arc contained in an orbit.
A curve is {\bf simple} if it is injective.
We also denote by $C$ the image of a curve $C$.
Denote by $\partial C := C(\partial I)$ the boundary of a curve $C$, where $\partial I$ is the boundary of $I \subset \mathbb{S}^1$. Put $\mathrm{int} C := C \setminus \partial C$.
A simple curve is a {\bf simple closed curve} if its domain is $\mathbb{S}^1$ (i.e. $I = \mathbb{S}^1$).

\subsubsection{Directed topological graphs}
A {\bf directed topological graph} is a topological realization of a 1-dimensional simplicial complex with a directed structure on edges. 
A {\bf directed cycle} in a directed topological graph is an embedded cycle whose edges are oriented in the same direction. 
A directed topological graph is a {\bf directed surface graph} if it is realized in a surface.

\subsubsection{End completion of a topological space}
Consider the direct system $\{K_\lambda\}$ of compact subsets of a topological space $X$ and inclusion maps such that the interiors of $K_\lambda$ cover $X$.  
There is a corresponding inverse system $\{ \pi_0( X - K_\lambda ) \}$, where $\pi_0(Y)$ denotes the set of connected components of a space $Y$. 
Then the set of {\bf ends }of $X$ is defined to be the inverse limit of this inverse system. 
Notice that $X$ has one end $x_{\mathcal{U}}$ for each sequence $\mathcal{U} := (U_i)_{i \in \mathbb{Z}_{>0}}$ with $U_i \supseteq U_{i+1}$ such that $U_i$ is a connected component of $X - K_{\lambda_i}$ for some $\lambda_i$. 
Considering the disjoint union $\bm{X_{\mathrm{end}}}$ of $X$ and  $\{ \pi_0( X - K_\lambda ) \}$ as a set, a subset $V$ of the union $X_{\mathrm{end}}$ is an open \nbd of an end $x_{\mathcal{U}}$ if there is some $i \in \mathbb{Z}_{>0}$ such that $U_i \subseteq V$. 
Then the resulting topological space $X_{\mathrm{end}}$ is called the {\bf end completion} (or end compactification) of $X$. 
Note that the end completion is not compact in general. 
From Theorem~3~\cite{richards1963classification}, any connected surfaces of finite genus are  homeomorphic to the resulting surfaces from closed surfaces by removing closed totally disconnected subsets. 
Therefore the end compactification of a connected surface of finite genus is a closed surface and the end compactification of a connected surface with finitely many boundary components and finite genus is a compact surface.

\subsection{Notion of dynamical systems}
%
By a {\bf flow}, we mean a continuous $\mathbb{R}$-action on a surface. 
Let $v \colon  \R \times S \to S$ be a flow on a compact surface $S$. 
Then $v_t := v(t, \cdot)$ is a homeomorphism on $S$. 
For $t \in \R$, define $v_t : S \to S$ by $v_t := v(t, \cdot )$.
For a point $x$ of $S$, we denote by $O(x)$ the orbit of $x$ (i.e. $O(x) := \{ v_t(x) \mid t  \in \R \}$). 
An {\bf orbit arc} is an arc contained in an orbit. 
A subset of $S$ is said to be {\bf invariant} (or saturated) if it is a union of orbits. 
The saturation $\mathrm{Sat}_v(A) = v(A)$ of a subset $A \subseteq S$ is the union of orbits intersecting $A$. 
A point $x$ of $S$ is {\bf singular} if $x = v_t(x)$ for any $t \in \R$ and is {\bf periodic} if there is a positive number $T > 0$ such that $x = v_T(x)$ and  $x \neq v_t(x)$ for any $t \in (0, T)$. 
A point is {\bf closed} if it is either singular or periodic. 
Denote by $\mathop{\mathrm{Sing}}(v)$ (resp. $\mathop{\mathrm{Per}}(v)$) the set of singular (resp. periodic) points. 
A point is {\bf wandering} if there are its neighborhood $U$ and a positive number $N$ such that $v_t(U) \cap U = \emptyset$ for any $t > N$. Then such a neighborhood is called a {\bf wandering domain}. 
A point is {\bf non-wandering} if it is not wandering (i.e. for any its neighborhood $U$ and for any positive number $N$, there is a number $t \in \mathbb{R}$ with $|t| > N$ such that $v_t(U) \cap U \neq \emptyset$).

For a point $x \in S$, define the $\omega$-limit set $\omega(x)$ and the $\alpha$-limit set $\alpha(x)$ of $x$ as follows: $\omega(x) := \bigcap_{n\in \mathbb{R}}\overline{\{v_t(x) \mid t > n\}} $, $\alpha(x) := \bigcap_{n\in \mathbb{R}}\overline{\{v_t(x) \mid t < n\}} $. 
For an orbit $O$, define $\omega(O) := \omega(x)$ and $\alpha(O) := \alpha(x)$ for some point $x \in O$.
Note that an $\omega$-limit (resp. $\alpha$-limit) set of an orbit is independent of the choice of a point in the orbit. 
A {\bf separatrix} is a non-singular orbit whose $\alpha$-limit or $\omega$-limit set is a singular point.
The orbit space $\bm{S/v}$ of $v$ is a quotient space $S/\sim$ defined by $x \sim y$ if $O(x) = O(y)$ (resp. $\overline{O(x)} = \overline{O(y)}$). 
Notice that an orbit space $S/v$ is the set $\{ O(x) \mid x \in S \}$ as a set.

\subsubsection{Resulting flow by the end completion}
For a flow $v$ on a surface $S$ of with finitely many boundary components and finite genus, considering ends to be singular points, we obtain the resulting flow $\bm{v_{\mathrm{end}}}$ on the surface $S_{\mathrm{end}}$ which is a union of compact surfaces. 

\subsubsection{Open flow boxes, open periodic annuli, and open transverse annuli boxes}
An arc is an {\bf orbit arc} if it is contained in an orbit. 
An open disk $U$ is a {\bf trivial flow box} if there is a homeomorphism $h \colon (0,1)^2 \to U$ such that the image $h((0,1) \times \{ t \})$ for any $t \in (0,1)$ is an open orbit arc as in the left of Figure~\ref{flow-boxes}. 
\begin{figure}[t]
\begin{center}
\includegraphics[scale=0.35]{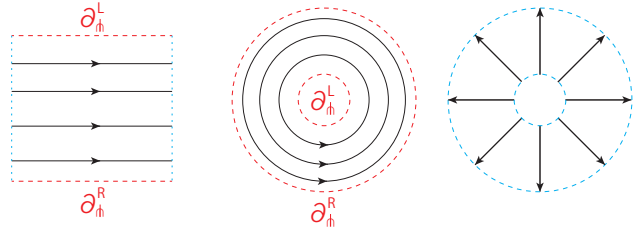}
\end{center}
\caption{An open trivial flow box, an open periodic annulus, and a transverse annulus}
\label{flow-boxes}
\end{figure}
If $h$ can be extend to $h \colon (0,1) \times [0,1] \to U$, then $h((0,1) \times \{ 0 \})$ and $h((0,1) \times \{ 1 \})$ are called {\bf transverse boundary components} of the trivial flow box $U$. 

An invariant annulus $A$ is an {\bf open periodic annulus} if there is a homeomorphism $h \colon \{ (x, y) \mid 1< x^2 + y^2 < 2 \} \to A$ such that the image $h(\{ (x, y) \mid x^2 + y^2 = t \})$ for any $t \in (1,2)$ is a periodic orbit as in the middle of Figure~\ref{flow-boxes}. 
If $h$ can be extend to $h \colon \{ (x, y) \mid 1 \geq x^2 + y^2 \geq 2 \} \to A$, then $h(\{ (x, y) \mid x^2 + y^2 = 1 \})$ and $h(\{ (x, y) \mid x^2 + y^2 = 2 \})$ are called {\bf transverse boundary components} of the open periodic annulus $A$. 

An invariant annulus $A$ is an {\bf open transverse annulus} if there is a homeomorphism $h \colon \{ (x, y) \mid 1< x^2 + y^2 < 2 \} \to A$ such that the image $h(\{ (r \cos \theta, r \sin \theta) \mid 1 < r < \sqrt{2} \})$ for any $\theta \in [0,2\pi)$ is an orbit arc as in the right of Figure~\ref{flow-boxes}.

\subsubsection{Sectors of singular points}

A {\bf parabolic sector} is topologically equivalent to an open trivial flow box with the point $(\pm \infty, 0)$ and a hyperbolic (resp. elliptic) sector is topological equivalent to a Reeb component with the point $(\infty, 0)$ (resp. $(- \infty, 0)$) as in Figure~\ref{sectors_all}.
A separatrix is a {\bf boundary of a hyperbolic sector} if its orbit arc is a boundary component of a hyperbolic sector. 
In other words, a boundary of a hyperbolic sector is the image of a boundary component of the Reeb component with the point $(\infty, 0)$ via a topological conjugacy. 
An isolated singular point $x$ is {\bf finitely sectored} if either it is a center or there is an open \nbd of $x$ which is an open disk and is a finite union of $x$ and sectors such that a pair of distinct sectors intersects at most two orbit arcs. 
A finitely sectored singular point $x$ is {\bf trivial} if it is a center. 
Note that a flow with sectored singular points has at most finitely many singular points. 

\subsubsection{Border separatrices and virtually border separatrices}

A connected component of the union of open elliptic sectors of a singular point is called a {\bf maximal open elliptic sector}. 
Note that a maximal open elliptic sector $U$ is invariant and has the $\omega$- and $\alpha$-limit set $\{ x \}$ of any point in $U$ such that the orbit closure $\overline{O(y)} = O(y) \sqcup \{ x \}$ for any point $y \in U$ bounds an invariant flow box in $U$. 
For a finitely sectored singular point $x$, a nonempty open parabolic sector is a {\bf maximal open parabolic sector} if it is an open parabolic sector contained in the complement of the union of maximal elliptic open sectors of $x$ and hyperbolic sectors of $x$ which is maximal with respect to the inclusion order. 
A separatrix is a {\bf hyperbolic} (resp. {\bf elliptic}, {\bf parabolic}) {\bf border separatrix} if it is contained in the boundary of a hyperbolic sector (resp. a maximal open elliptic sector, a maximal open parabolic sector). 
A separatrix is a {\bf border separatrix} if it is either hyperbolic, elliptic, or parabolic border separatrix. 
%
%
A non-recurrent orbit is a {\bf virtually border separatrix} if it is a border separatrix of the resulting flow $v_{\mathrm{end}}$. 

\subsubsection{Flows of weakly finite type}

A flow is of {\bf weakly finite type} if it has sectored singular points, there are at most finitely many limit cycles, and any recurrent orbits are closed (i.e. $S = \Cv \sqcup \mathrm{P}(v)$).
%
%
%
%
For a flow of weakly finite type on a surface, denote by $\bm{\mathop{\mathrm{BD}_+(v)}}$ the union of singular points, limit cycles, one-sided periodic orbits, and virtually border separatrices. 
The invariant subset $\mathop{\mathrm{BD}_+(v)}$ corresponds to the original one for a flow of weakly finite type on a compact surface because of \cite[Lemma~7.8]{yokoyama2017decompositions}. 
Moreover, we have the following description. 

\begin{lemma}\label{lem:bd1}
The following statements hold for a flow $v$ of weakly finite type on a compact surface $S$: 
\\
{\rm(1)} The invariant subset $\mathop{\mathrm{BD}_+(v)}$ is the finite union of singular points, limit cycles, one-sided periodic orbits, and border separatrices. 
\\
{\rm(2)} Any connected components of the complement $S - \mathop{\mathrm{BD}}_+(v)$ are either invariant open trivial flow boxes, invariant open transverse annuli, invariant open periodic annuli, or periodic tori. 
\end{lemma}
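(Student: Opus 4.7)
The plan is to exploit compactness of $S$ to reduce both claims to standard structural facts, the first being essentially definitional and the second an orbit-classification argument. Since $S$ is compact, the end completion $S_{\mathrm{end}}$ equals $S$ and $v_{\mathrm{end}}=v$, so by the very definition of a virtually border separatrix, such an orbit coincides with a border separatrix of $v$, and conversely every border separatrix is trivially virtually a border separatrix. Together with the author's invocation of \cite[Lemma~7.8]{yokoyama2017decompositions}, this identifies $\BD_+(v)$ with the stated union. Finiteness of each constituent then follows directly from weakly finite type: the singular point set is finite (each is finitely sectored by hypothesis), limit cycles are finite by hypothesis, every finitely sectored singular point contributes only finitely many sector boundaries and hence finitely many border separatrices, and one-sided periodic orbits are the cores of maximal invariant periodic Möbius neighborhoods, hence isolated, so finitely many on compact $S$.

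For part (2), I would fix a connected component $C$ of $S-\BD_+(v)$ and argue orbit by orbit. Invariance of $\BD_+(v)$ makes $C$ invariant. Each orbit in $C$ avoids singular points, limit cycles, one-sided periodic orbits, and border separatrices; the weakly finite type hypothesis further forbids non-closed recurrent orbits. Consequently every orbit in $C$ is either a two-sided non-limit periodic orbit or a non-closed non-recurrent orbit, and both possibilities cannot occur in the same component (the frontier between periodic and non-periodic behavior would have to be a limit cycle or accumulation of border separatrices, placing it in $\BD_+(v)$).

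In the periodic case I would form the maximal saturated open periodic neighborhood $A$ of some periodic orbit $\gamma\in C$, apply the Poincaré flow-box / return map to the transversal through $\gamma$ to conclude $A$ is homeomorphic either to an open annulus foliated by periodic orbits or to the whole surface $S$ (a periodic torus), and then verify that every frontier orbit of $A$ accumulates periodic orbits and hence is a singular point, a limit cycle, a one-sided periodic orbit, or lies on a polycycle of border separatrices, placing $\partial A\subset \BD_+(v)$ and forcing $A=C$. In the non-recurrent case I would take a small transverse arc $\Sigma$ through a point of $C$ and extend the local flow box maximally along orbits, using compactness of $S$ and absence of recurrence in $C$ to conclude the resulting saturation is either a trivial flow box or a transverse annulus; again the frontier of this saturation, being made of orbits whose $\omega$- or $\alpha$-limit set is nonempty and contained in $\BD_+(v)$-material, must lie in $\BD_+(v)$.

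The main obstacle is the rigorous identification of the frontier of the maximal saturated region with a subset of $\BD_+(v)$. Concretely, one must show that an accumulation sequence of periodic orbits on a compact surface without non-closed recurrence converges to a union of singular points, limit cycles, one-sided periodic orbits, and border separatrices, and likewise that the $\omega$- and $\alpha$-limits of non-recurrent orbits in $C$ consist only of such pieces. Both reductions rely crucially on the sector structure at singular points (to describe how orbits approach a finitely sectored point through border separatrices) and on the finiteness built into weakly finite type; this is precisely the structural content invoked from \cite{yokoyama2017decompositions}, and once available the case analysis assembles into the claimed decomposition.
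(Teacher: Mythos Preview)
Your approach aligns with the paper's. The paper's own proof is in fact just two sentences: it observes that on a compact surface the present paper's definition of $\BD_+(v)$ (via virtually border separatrices) coincides with the original definition in \cite{yokoyama2017decompositions}, and then invokes \cite[Lemma~7.8]{yokoyama2017decompositions} verbatim for the conclusion. Your treatment of part~(1) is the same reduction, fleshed out with explicit finiteness counts for each constituent; this is fine and slightly more informative than the paper, which leaves finiteness implicit in the citation.

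For part~(2) the paper says nothing beyond the citation --- the decomposition into trivial flow boxes, transverse annuli, periodic annuli, and periodic tori is taken wholesale from \cite[Lemma~7.8]{yokoyama2017decompositions}. Your orbit-by-orbit sketch (periodic versus non-recurrent components, maximal saturated neighborhoods, frontier analysis) is a legitimate outline of how such a lemma is proved, and you correctly identify the crux: showing that the frontier of a maximal periodic annulus or maximal flow box lands in $\BD_+(v)$. That step genuinely needs the sector structure at singular points together with a Poincar\'e--Bendixson type description of limit sets under the weakly-finite-type hypotheses, and as you yourself say, this is exactly the structural content imported from \cite{yokoyama2017decompositions}. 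So your proposal is not a different route so much as an unpacking of what the cited lemma contains; it is correct as a sketch, but a self-contained proof would still have to carry out the frontier analysis in full rather than gesture at it.
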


\begin{proof}
By original definitions of $\mathop{\mathrm{BD}_+(v)}$ and $\mathop{\mathrm{BD}_+(v_{\mathrm{end}})}$ in \cite{yokoyama2017decompositions}, since any singular points of $v$ are sectored, we have  $\mathop{\mathrm{BD}_+(v)} = \mathop{\mathrm{BD}_+(v_{\mathrm{end}})}$. 
By \cite[Lemma~7.8]{yokoyama2017decompositions}, the invariant subset $\mathop{\mathrm{BD}_+(v)} = \mathop{\mathrm{BD}_+(v_{\mathrm{end}})}$ is the finite union of singular points, limit cycles, one-sided periodic orbits, and border separatrices. 
\end{proof}

\subsubsection{Circuit}
By a {\bf cycle}, we mean a periodic orbit.
By a {\bf non-trivial circuit}, we mean either a cycle or a continuous image of a directed cycle which is a graph but not a singleton, whose orientations of edges correspond to the directions of orbits, and which is the union of separatrices and finitely many singular points.
In other words, a non-trivial circuit that is not a cycle is a directed path as a graph whose initial point is also terminal.
Note that there are non-trivial circuits with infinitely many edges and that any non-trivial non-periodic circuit contains non-recurrent orbits.
A non-trivial circuit $\gamma$ is a {\bf limit circuit} if it is the $\alpha$-limit or $\omega$-limit set of a point outside of $\gamma$. 

\subsubsection{Transversality for continuous flows on surfaces}
Notice that we can define transversality using tangential spaces of surfaces because each flow on a compact surface is topologically equivalent to a $C^1$-flow by Gutierrez's smoothing theorem~\cite{gutierrez1978structural}.
However, to modify transverse arcs explicitly, we define transversality immediately as follows.  

A curve $C$ is {\bf transverse to} $v$ at a point $p \in \mathrm{int} C$ if there are a small neighborhood $U$ of $p$ and a homeomorphism $h:U \to [-1,1]^2$ with $h(p) = 0$ such that $h^{-1}([-1,1] \times \{t \})$ for any $t \in [-1, 1]$ is an orbit arc and $h^{-1}(\{0\} \times [-1,1]) = C \cap U$.
A curve $C$ is {\bf transverse to} $v$ at a point $p \in \partial C \cap \partial S$ (resp. $p \in \partial C \setminus \partial S$) if there are a small neighborhood $U$ of $p$ and a homeomorphism $h:U \to [-1,1] \times [0,1]$ (resp. $h:U \to [-1,1]^2$) with $h(p) = 0$ such that $h^{-1}([-1,1] \times \{t \})$ for any $t \in [0, 1]$ (resp. $t \in [-1, 1]$) is an orbit arc and $h^{-1}(\{0\} \times [0,1]) = C \cap U$ (resp. $h^{-1}(\{0\} \times [-1,1]) = C \cap U$).
A simple curve $C$ is {\bf transverse to} $v$ if so is it at any point in $C$.
A simple curve $C$ is {\bf transverse to} $v$ is called a {\bf transverse arc}. 
A simple closed curve is a {\bf closed transversal} if it transverses to $v$. 

\subsection{Fundamental notion of Hamiltonian flows on a compact surface}
A $C^r$ vector field $X$ for any $r \in \Z_{\geq0}$ on an orientable surface $S$ is {\bf Hamiltonian} if there is a $C^{r+1}$ function $H \colon S \to \mathbb{R}$ such that $dH= \omega(X, \cdot )$ as a one-form, where $\omega$ is a volume form of $S$.
In other words, locally the Hamiltonian vector field $X$ is defined by $X = (\partial H/ \partial x_2, - \partial H/ \partial x_1)$ for any local coordinate system $(x_1,x_2)$ of a point $p \in S$.
A flow is {\bf Hamiltonian} if it is topologically equivalent to a flow generated by a Hamiltonian vector field.
Note that a volume form on an orientable surface is a symplectic form.
By \cite[Theorem 3]{cobo2010flows}, any singular points of a Hamiltonian vector field with finitely many singular points on a compact surface is either a center or a multi-saddle. 

To characterize compact-like behavior of Hamiltonian dynamics, for a flow $v$ on a surface $S$, denote by $\bm{\mathop{\mathrm{Sing}}(v)_\infty}$ the set of singular points which are neither centers nor multi-saddles and put $\bm{S_{\mathrm{unbd}}} := S - \mathop{\mathrm{Sing}}(v)_\infty$ 
It is known that a $C^r$ ($r \geq 1$) Hamiltonian vector field on a compact surface is structurally stable with respect to the set of $C^r$ Hamiltonian vector fields if and only if both each singular point is nondegenerate and each separatrix is self-connected (see  \cite[Theorem 2.3.8, p. 74]{ma2005geometric}).

%

\subsubsection{Extended orbit spaces of Hamiltonian flows on unbounded surfaces}
A non-singular point $x$ is a {\bf non-Hausdorff point} if there is a non-singular point $y \in S$ with $y \notin \overline{O(x)}$ and $x \notin \overline{O(y)}$ such that there are no disjoint invariant \nbds $U_x$ and $U_y$ of $x$ and $y$ respectively. 
Then the pair $x$ and $y$ is called a {\bf non-Hausdorff pair}. 
Define an equivalence relation $\sim_{\mathrm{nH}}$ on $S$ as follows: 
$x \sim_{\mathrm{nH}} y$ if either $O(x) = O(y)$ or the pair of $x$ and $y$ is a non-Hausdorff pair. 
For any non-Hausdorff point $x$, denote by $[x]_{\mathrm{nH}}$ the equivalent class of $x$.
For a flow $v$ of weakly finite type on a surface $S$ with finitely many boundary components, finite genus, and finite ends, the {\bf extended orbit} $O_{\mathrm{ex}}(x)$ of any non-Hausdorff point $x$ is defined as follows: 
\[
O_{\mathrm{ex}}(x) := [x]_{\mathrm{nH}} \cup \left( \bigcup_{y \in [x]_{\mathrm{nH}}} \alpha(y) \cup \omega(y) \right) \subset S
\]
For any point in the complement of the union of extended orbits of non-Hausdorff points, its extended orbit is defined as its orbit. 
Define an equivalence relation $\sim_{\mathrm{ex}}$ on $S$ as follows: 
$x \sim_{\mathrm{ex}} y$ if there is an extended orbit that contains $x$ and $y$. 
Then the quotient space $S/\sim_{\mathrm{ex}}$ is called the {\bf extended orbit space} and is denoted by $\bm{S/v_{\mathrm{ex}}}$. 
Notice that the extended orbit space of $v$ is a quotient space of the orbit space of $v$. 
The extended orbit space of a flow of weakly finite type on a surface with finitely many boundary components, finite genus, and finite ends is a generalization of the extended orbit space on a compact surface. 
In particular, the extended orbit space of a Hamiltonian flows with finitely many singular points on a compact surface in the sense of this paper corresponds to the extended orbit space in the sense of one in \cite{yokoyama2021ham}. 
Indeed, the extended orbit space $S/v_{\mathrm{ex}}$ is the quotient space $S/\sim_{\mathrm{ms}}$, where $x \sim_{\mathrm{ms}} y$ if there is either an orbit or a multi-saddle connection that contains $x$ and $y$. 

\section{Characterization of Hamiltonian flows on unbounded surfaces}

We characterize isolated singular points for Hamiltonian flows as follows. 

\begin{lemma}\label{lem:char_isolated_sing}
Any isolated singular point for a Hamiltonian flow on a surface is either a center or a multi-saddle. 
\end{lemma}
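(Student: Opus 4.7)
The plan is to work without loss of generality with a flow $v$ generated by a Hamiltonian vector field $X$ with $C^1$ Hamiltonian $H$, since being a center or a multi-saddle is a topologically invariant property and $v$ is topologically equivalent to such a flow by definition. The isolated singular point $p$ of $v$ is then an isolated critical point of $H$, and $H$ is a first integral because $dH(X) = \omega(X, X) = 0$, where $\omega$ denotes the volume form.

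The first step is to rule out elliptic and parabolic sectors at $p$. If $U$ is an open sector of either type at $p$, then every orbit $O(y) \subset U$ has $p$ in its $\alpha$- or $\omega$-limit set, so $H(y) = \lim_{t \to \pm\infty} H(v_t(y)) = H(p)$ by continuity of $H$ at $p$ together with the invariance of $H$ along orbits. Hence $H \equiv H(p)$ on the open set $U$, forcing $dH \equiv 0$ and therefore $X \equiv 0$ on $U$, which contradicts the isolatedness of $p$.

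Next I split into cases according to whether $p$ is a local extremum of $H$. If it is, then on a sufficiently small closed disk $D$ around $p$ the only critical point of $H$ is $p$, and for values $c$ strictly between $H(p)$ and an extreme value of $H$ on $\partial D$, each level set $H^{-1}(c) \cap D$ is a simple closed curve encircling $p$ by the implicit function theorem plus a standard compactness argument; being a connected invariant $1$-manifold containing no singular point, it is a single periodic orbit of $v$, so $p$ is a center. If $p$ is not a local extremum, I would invoke the classical local topological structure of an isolated critical point of a smooth function on a surface: the critical level set $H^{-1}(H(p))$ near $p$ is $\{p\}$ together with finitely many arcs emanating from $p$. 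These arcs are non-singular orbits of $v$ with $p$ in their limit sets, and they cut a small disk neighborhood of $p$ into finitely many open regions on each of which $H - H(p)$ has constant sign; nearby level sets of $H$ foliate each region by orbit arcs joining one bounding arc to an adjacent one. Since elliptic and parabolic sectors have already been excluded, each such region is a hyperbolic sector, and $p$ is finitely sectored with only hyperbolic sectors, i.e.\ a multi-saddle.

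The main obstacle is the finite-branching statement for the critical level set of $H$ at a non-extremal isolated critical point, which is delicate when $H$ is only $C^1$. I would handle it by either appealing to a stronger regularity assumption on $H$ or by citing the known local topological classification of isolated singular points of planar flows admitting a continuous first integral, using the already-established absence of elliptic and parabolic sectors to rule out accumulations of branches on the boundary of a small disk around $p$.
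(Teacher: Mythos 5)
Your exclusion of elliptic and parabolic sectors is correct (constancy of $H$ on such a sector forces $dH\equiv 0$, hence $X\equiv 0$, on an open set accumulating at $p$, contradicting isolatedness), and the local-extremum case is essentially sound. The genuine gap is in the non-extremal case, and it sits exactly where you flag it: the assertion that $H^{-1}(H(p))$ near an isolated critical point is $\{p\}$ together with \emph{finitely many} arcs is not a classical fact for $C^1$, or even $C^\infty$, functions. It is a real-analytic phenomenon (curve selection lemma); a merely smooth function can have an isolated critical point whose critical level has infinitely many local branches, so a priori the singular point could carry infinitely many hyperbolic sectors with angular widths shrinking to zero. Neither of your proposed repairs closes this: assuming extra regularity changes the statement (the paper's definition of Hamiltonian allows $H$ to be only $C^1$), and the absence of elliptic and parabolic sectors does not by itself forbid an accumulation of hyperbolic sectors --- that finiteness is precisely what must be proved, since a multi-saddle is by definition \emph{finitely} sectored. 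I am not aware of an off-the-shelf ``local classification of isolated singular points of flows with a continuous first integral'' that delivers this, and you do not supply one.

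The paper closes the gap globally rather than locally: it takes a closed disk $D$ around $p$ meeting the singular set only in $\{p\}$, with $\partial D$ transverse to the flow except at finitely many tangencies; it shows $D$ contains no periodic orbit (otherwise Poincar\'e--Hopf would force $p$ to be a center); it converts the tangencies into singular points and doubles $D$ to a sphere carrying the time-reversed flow on the mirror copy. Every orbit meeting $D$ leaves $D$ in both time directions, so the doubled flow is non-wandering with finitely many singular points on a sphere, and the classification of singular points of such flows \cite[Theorem 3]{cobo2010flows} yields that $p$ is a multi-saddle. To complete your local approach you would either have to reprove that finiteness statement for $C^1$ first integrals directly, or perform a reduction to a citable compactness-based result of this kind, as the paper does.
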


\begin{proof}
Let $v$ be a Hamiltonian flow on a surface $S$ and $x$ an isolated singular point.
By the existence of a Hamiltonian, there are no closed transversals. 
This implies the non-existence of non-closed recurrent orbits and so $S = \Cv \sqcup \mathrm{P}(v)$. 
Since the existence of limit circuits implies the existence of closed transversals outside of the limit circuits, by a generalization of the Poincar\'e--Bendixson theorem to $v_{\mathrm{end}}$, the $\omega$-limit set and $\alpha$-limit set of a non-recurrent point with respect to $v_{\mathrm{end}}$ are singular points. 
Taking the double of $S$ if necessary, we may assume that $x$ is outside of the boundary $\partial S$. 
Therefore there is a simple closed curve $\gamma$ which bounds a closed disk $D$ with $D \cap \Sv = \{ x \}$ and which is transverse to $v$ except finitely many points (cf. \cite[Lemma 3.1]{kibkalo2021topological}). 
From $D - \{ x \} \subseteq S - \Sv$, we obtain that the set difference $D  \setminus H^{-1}(H(x)) = (D  - \{ x \}) \setminus H^{-1}(H(x))$ is dense in $D$. 
We may assume that $x$ is not a center. 

We claim that $D$ contains no periodic orbits. 
Indeed, assume that there is a periodic orbit $O \subset D$. 
Since any closed disk can be embedded in a sphere, by Jordan-Schoenflies theorem, the periodic orbit bounds a closed disk $B \subseteq D$. 
Then the restriction $v|_B$ is a Hamiltonian flow with finitely many singular points on the compact surface $B$. 
\cite[Theorem 3]{cobo2010flows} implies that every singular point for $v|_B$ is either a center or a multi-saddle. 
From Poincar\'e-Hopf theorem, by $D \cap \Sv = \{ x \}$, $B \subseteq D$ contains a center and so $x$ is a center. 

For any point $y \in D \setminus H^{-1}(H(x))$, since the $\omega$-limit set and $\alpha$-limit set of any non-recurrent point with respect to $v_{\mathrm{end}}$ are singular points, we obtain $O^-(y) \not\subseteq D$ and $O^+(y) \not\subseteq D$.  
Let $x_1, x_2, \ldots , x_k$ be the tangencies of $\partial D$. 
Put $C : = \bigcup_{i=1}^k H^{-1}(H(x_i)) \cup H^{-1}(H(x))$. 
Since $D - \{x\} \subseteq S - \Sv$, the existence of the Hamiltonian implies that the set difference $D \setminus C = (D - \{x\}) \setminus C$ is dense in $D$. 
Replace the tangencies $x_1, x_2, \ldots , x_k$ into singular points, the resulting flow is denoted by $v'$. 
Taking the double $D \cup -D$ of $D$ and considering the time-reversing flow $-v'$ on $-D$, the resulting space $D \cup -D$ is a sphere and the resulting flow $v'_{D \cup -D}$ satisfies that $v'_{D \cup -D}(\gamma \setminus H^{-1}(H(C))) \subset \mathop{\mathrm{Cl}}(v'_{D \cup -D})$ and so that $D \cup -D = \overline{\mathop{\mathrm{Cl}}(v_{D \cup -D})}$. 
Therefore $v'_{D \cup -D}$ is a non-wandering flow with finitely many singular points on a sphere. 
By \cite[Theorem 3]{cobo2010flows}, the singular point $x$ is a multi-saddle with respect to $v'_{D \cup -D}$ and so $v$. 
\end{proof}

We have the following topological characterization of Hamiltonian flows on punctured surfaces. 

\begin{lemma}\label{lem:02}
The following are equivalent for a flow $v$ with sectored singular points on a compact surface $S$: 
\\
{\rm(1)} The restriction $v|_{S_{\mathrm{unbd}}}$ is Hamiltonian. 
\\
{\rm(2)} The flow $v$ is a flow of weakly finite type, the restriction $S_{\mathrm{unbd}}/v_{\mathrm{ex}}$ is a finite directed graph without directed cycles, and $S$ is orientable.  

In any case, $\mathop{\mathrm{BD}_+(v)}$ consists of finitely many singular points and border separatrices, and any connected components of the complement $S - \mathop{\mathrm{BD}}_+(v)$ are either open trivial flow boxes or periodic annuli. 
\end{lemma}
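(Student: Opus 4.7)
The plan is to establish both implications by leveraging the existence of a real-valued first integral whose level sets are precisely the orbit closures off the singular set. I would then treat the ``in any case'' assertions as a corollary of the structure lemmas already in the paper, principally Lemma~\ref{lem:bd1}.

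For the implication $(1) \Rightarrow (2)$, I would first note that the existence of a Hamiltonian $H$ on $S_{\mathrm{unbd}}$ forces $S_{\mathrm{unbd}}$ to carry a volume (hence symplectic) form, so $S_{\mathrm{unbd}}$ is orientable; since $S \setminus S_{\mathrm{unbd}} = \mathop{\mathrm{Sing}}(v)_{\infty}$ is a finite set, $S$ itself is orientable. The constancy of $H$ along orbits rules out closed transversals inside $S_{\mathrm{unbd}}$, and this in turn excludes non-closed recurrent orbits and limit circuits (both would yield a closed transversal in a flow-box neighborhood, exactly as in the proof of Lemma~\ref{lem:char_isolated_sing}). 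Together with the sectored-singular-points hypothesis, this gives that $v$ is of weakly finite type. Finally $H$ descends to a continuous function $\bar H$ on $S_{\mathrm{unbd}}/v_{\mathrm{ex}}$ which is strictly monotone along every edge coming from a trivial flow box and locally constant on vertices; since $H$ is single-valued, the induced orientation on $S_{\mathrm{unbd}}/v_{\mathrm{ex}}$ has no directed cycles. The finiteness of the graph follows from Lemma~\ref{lem:bd1}(1) combined with weakly finite type.

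For the implication $(2) \Rightarrow (1)$, I would first apply Lemma~\ref{lem:bd1} to decompose $S - \mathop{\mathrm{BD}}_+(v)$ into invariant open trivial flow boxes, open periodic annuli, open transverse annuli, and periodic tori. A periodic torus or a transverse annulus would give a closed transversal whose saturation produces a directed cycle in $S_{\mathrm{unbd}}/v_{\mathrm{ex}}$, contradicting (2); so only open trivial flow boxes and periodic annuli occur in $S_{\mathrm{unbd}}$. I would then build a Hamiltonian by first assigning real values to the vertices of the finite acyclic directed graph $S_{\mathrm{unbd}}/v_{\mathrm{ex}}$ compatibly with the partial order (this is possible precisely because the graph has no directed cycle, via a topological sort), and then extending continuously across each edge using the local model of Lemma~\ref{lem:bd1}(2): a monotone parametrization of the transverse interval for a trivial flow box and of the radial coordinate for a periodic annulus. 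Orientability supplies a smooth volume form $\omega$ on $S_{\mathrm{unbd}}$, and a final rescaling of $H$ within each component (using that $H$ has no critical values in the interior of components) produces a function satisfying $dH = \omega(X,\cdot)$ for the vector field $X$ generating $v$.

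The ``in any case'' statement is then immediate: the finiteness and composition of $\mathop{\mathrm{BD}_+(v)}$ are Lemma~\ref{lem:bd1}(1), while the restriction of the component list to open trivial flow boxes and periodic annuli has already been extracted in both directions (from absence of closed transversals under (1), and from absence of directed cycles under (2)). The main obstacle will be the construction step in $(2) \Rightarrow (1)$: ensuring that the locally defined values of $H$ on adjacent trivial flow boxes and periodic annuli match continuously along the shared border separatrices and multi-saddle connections in $\mathop{\mathrm{BD}}_+(v)$. This is where the acyclicity of $S_{\mathrm{unbd}}/v_{\mathrm{ex}}$ is essential, since it guarantees that the topological sort on vertices assigns consistent boundary values to every edge simultaneously, thereby avoiding monodromy contradictions around any multi-saddle connection.
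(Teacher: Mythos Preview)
Your plan for $(1)\Rightarrow(2)$ and your use of Lemma~\ref{lem:bd1} to rule out transverse annuli and periodic tori under $(2)$ match the paper's proof essentially verbatim.

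The gap is in the last step of $(2)\Rightarrow(1)$. You propose to pick a volume form $\omega$ on $S_{\mathrm{unbd}}$ and then ``rescale $H$ within each component'' so that $dH=\omega(X,\cdot)$ for \emph{the} vector field $X$ generating $v$. Two problems: first, $v$ is only a continuous $\mathbb{R}$-action, so no such $X$ need exist, and the paper's definition of ``Hamiltonian flow'' is topological equivalence to a flow generated by a Hamiltonian vector field, not literal coincidence; second, once $\omega$ and $X$ are fixed, $\iota_X\omega$ is a fixed $1$-form, so either it already equals $dH$ or no rescaling of $H$ will make it so. You have also only arranged $H$ to be continuous across the border separatrices, which is not enough to form $dH$.

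The paper takes the same height-function-from-acyclicity idea but aims at the correct target: it builds a \emph{smooth} $\widetilde H$ on $S_{\mathrm{unbd}}$ and shows that the Hamiltonian flow of $\widetilde H$ is topologically equivalent to $v|_{S_{\mathrm{unbd}}}$. The technical content you are missing is exactly here. One first defines $H$ piecewise on the closures $D_i$ of the components of $S-\mathop{\mathrm{BD}}_+(v)$, with all derivatives vanishing along the border separatrices so that the pieces glue smoothly; but then $dH=0$ along every border separatrix, so the Hamiltonian flow of $H$ would be singular there. The paper repairs this by an explicit bump-function deformation in a flow box $U_{i,j}\cong(0,1)\times(-1,1)$ around each border separatrix, interpolating $H$ with a linear function $g_{i,j}$ in the transverse coordinate so that the modified $\widetilde H$ is strictly monotone across the separatrix. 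Only after this does one obtain a genuine smooth Hamiltonian whose flow has the same orbit portrait as $v$. Your acyclicity/topological-sort argument supplies the boundary values, but it does not by itself produce the required smoothness or non-degeneracy along $\mathop{\mathrm{BD}}_+(v)-\mathop{\mathrm{Sing}}(v)$.
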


\begin{proof}
We may assume that $S$ is connected. 
Suppose that the restriction $v|_{S_{\mathrm{unbd}}}$ is Hamiltonian. 
Since $v|_{S_{\mathrm{unbd}}}$ is Hamiltonian, there are no closed transversals in $S_{\mathrm{unbd}}$. 
The non-existence of closed transversals on $S_{\mathrm{unbd}}$ implies the non-existence of limit circuits and non-closed recurrent orbits on $S_{\mathrm{unbd}}$. 
Then $S_{\mathrm{unbd}} \subseteq \Cv \sqcup \mathrm{P}(v)$. 
By the finiteness of $\mathop{\mathrm{Sing}}(v)_\infty$, the surface $S = S_{\mathrm{unbd}} \sqcup \mathop{\mathrm{Sing}}(v)_\infty$ is orientable and there are no closed transversals in $S$. 
Then $S = \Cv \sqcup \mathrm{P}(v)$. 
Therefore $v$ is a flow of weakly finite type. 
Lemma~\ref{lem:bd1} implies that $\mathop{\mathrm{BD}_+(v)}$ consists of finitely many orbits and the complement $S - \mathop{\mathrm{BD}_+(v)}$  is a finite disjoint union of invariant open trivial flow boxes and invariant open periodic annuli. 
Then the quotient space $(S - \mathop{\mathrm{BD}}_+(v))/v_{\mathrm{ex}} = (S - \mathop{\mathrm{BD}}_+(v))/v$ is a finite disjoint union of open intervals. 
Since non-Hausdorff pairs have the same values, the restriction $S_{\mathrm{unbd}}/v_{\mathrm{ex}}$ is a finite directed graph. 
The existence of the Hamiltonian $H \colon S_{\mathrm{unbd}} \to \R$ implies that the induced function $H_{\mathrm{ex}} \colon S_{\mathrm{unbd}}/v_{\mathrm{ex}} \to \R$ by the Hamiltonian $H$ is well-defined. 
Therefore the directed graph $S_{\mathrm{unbd}}/v_{\mathrm{ex}}$ contains no directed cycles and so is a finite directed graph without directed cycles.

Conversely, suppose that $v$ is a flow of weakly finite type, the restriction $S_{\mathrm{unbd}}/v_{\mathrm{ex}}$ is a finite directed graph without directed cycles, and $S$ is orientable. 
Then there are no closed transversals. 
By Lemma~\ref{lem:bd1}, the singular point set $\Sv$ consists of at most finitely many singular points,  and the non-existence of closed transversals implies that any connected components of the complement $S - \mathop{\mathrm{BD}}_+(v)$ are either open trivial flow boxes or periodic annuli. 
In particular, there are no limit circuits. 
By a generalization of the Poincar\'e--Bendixson theorem~(cf. \cite[Theorem 2.6.1]{nikolaev1999flows}), the $\omega$-limit set and $\alpha$-limit set of a non-recurrent point are singular points. 
Then the orbit space $(S - \mathop{\mathrm{BD}}_+(v))/v$ is a finite disjoint union of directed open intervals induced by the orientation of $S$. 
From the non-existence of limit cycles, Lemma~\ref{lem:bd1} implies that the difference $\mathop{\mathrm{BD}}_+(v) - \Sv$ consists of finitely many one-sided periodic orbits and border separatrices. 
By the flow box theorem for a continuous flow on a compact surface (cf. Theorem 1.1, p.45\cite{aranson1996introduction}), there is a flow box for an orbit of the difference $\mathop{\mathrm{BD}}_+(v) - \Sv$. 
Define a graph $G= (V, D)$ as follows: 
Define a binary relation $\sim$ on the difference $\mathop{\mathrm{BD}}_+(v) - \Sv$ by $x \sim y$ if there are a connected component $U$ of the complement $S - \mathop{\mathrm{BD}}_+(v)$ and a letter $\sigma \in \{ R, L \}$ such that $x, y \in  \partial^\sigma U$. 
Denote by $\sim_V$ the transitive closure of $\sim$.  
The vertex set $V$ is the quotient space $(\mathop{\mathrm{BD}}_+(v) - \Sv)/\sim_V$.  
A directed edge from a vertex $\gamma$ to a vertex $\mu$ exist if there is a connected component $U$ of the complement $S - \mathop{\mathrm{BD}}_+(v)$ such that $\gamma \cap \partial^R U \neq \emptyset$ and $\mu \cap \partial^L U \neq \emptyset$. 
By the orientability of $S$, since the extended orbit space  $S/v_{\mathrm{ex}}$ has no directed cycles as a directed graph, 
the head of a directed edge does not correspond to the tail and so that there are no directed cycles in $G$. 
Therefore $G$ is a finite simple acyclic directed graph. 
Then we can define a height function of $G$ and so the induced height function $h \colon S_{\mathrm{unbd}} \to \R$ which is constant along each orbit. 

We claim that we can construct the Hamiltonian whose generating flow is topological equivalent to $v|_{S_{\mathrm{unbd}}}$ by modifying $h$. 
Indeed, let $D'_1, \ldots ,D'_k$ be the connected components of $S - \mathop{\mathrm{BD}_+(v)}$, each of which is either an invariant open trivial flow box or an invariant open periodic annulus. 
Since the difference $U - \{x \}$ of an open disk $U$ containing a non-trivial finitely sectored singular point $x$ can be identified with an open annulus which can be constructed by finitely many sectors as in  Figure~\ref{sectors_all} pasting the transverse boundary components of the sectors, the difference $S_{\mathrm{unbd}}$ can be obtained by pasting finitely many domains $D_1, \ldots , D_k$ along $\mathop{\mathrm{BD}_+(v)} - \mathop{\mathrm{Sing}}(v)_\infty$ such that each domain $D_i = \overline{D'_i} \setminus \mathop{\mathrm{Sing}}(v)_\infty$ and that $D_i \setminus \mathop{\mathrm{BD}_+(v)} = D'_i$. 
Moreover, the intersection $D_i \cap \mathop{\mathrm{BD}_+(v)}$ consists of finitely many border separatrices $O_{i,1}, \ldots ,O_{i,k_i}$ in $\mathrm{P}(v)$ and finitely many centers and multi-saddles. 
For any orbit $O_{i,j}$, there are connected components $D'$ and $D''$ of $S - \mathop{\mathrm{BD}_+(v)}$ such that $(D' \cup D'') \sqcup O_{i,j}$ is an open \nbd of $O_{i,j}$.
This implies that there are flow boxes $U_{i,j}$ and $V_{i,j}$ containing $O_{i,j}$ with $\overline{U_{i,j}} \setminus \Sv \subset V_{i,j}$ such that $V_{i,j}$ are pairwise disjoint and that $U_{i,j}$ can be identified with a flow box $(0,1) \times (-1,1)$ with orbit arcs $\{x \} \times (-1,1)$ for any $x \in (0,1)$ such that $O_{i,j}$ is $\{ 0 \} \times (-1,1)$. 
We identify $U_{i,j} \sqcup \partial^R U_{i,j} \sqcup \partial^L U_{i,j}$ with a flow box $(0,1) \times [-1,1]$ as in Figure~\ref{flowbox_oij}. 
\begin{figure}[t]
\begin{center}
\includegraphics[scale=0.45]{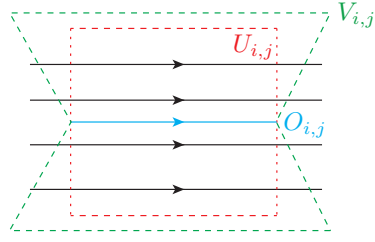}
\end{center}
\caption{A flow box $V_{i,j}$}
\label{flowbox_oij}
\end{figure}
Define a smooth Hamiltonian function $H$ on $S_{\mathrm{unbd}}$ whose Reeb graph is homeomorphic to the Reeb graph of $h$ such that all derivatives of $H|_{\partial^R D'_i \cup \partial^L D'_i}$ are zero, that the flows generated by $H|_{D'_i}$ correspond to $v|_{D'_i}$ up to topological equivalence, and that $H|_{V_{i,j}}$ depends only on $y$ and strictly increasing except $0$ as in the left of Figure~\ref{flowbox_oij_ham}. 
\begin{figure}[t]
\begin{center}
\includegraphics[scale=0.425]{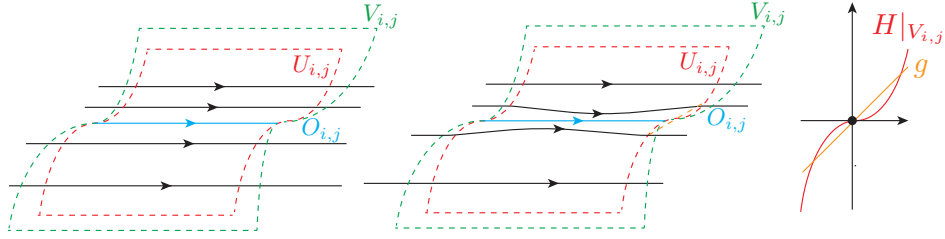}
\end{center}
\caption{Left and middle, a deformation of the flow on the flow box $U_{i,j}$; right, the functions $H|_{V_{i,j}}$ and $g_{i,j}$.}
\label{flowbox_oij_ham}
\end{figure}
Replacing $U_{i,j}$ with a small rectangle if necessary, we can take a linear function $g_{i,j} \colon [-1,1] \to \R$ with $g_{i,j}|_{\{-1,1\}} = H|_{\{-1,1\}}$ as in the right of Figure~\ref{flowbox_oij_ham}. 
Define a smooth bump function $\varphi_{i,j} \colon \R \to [0,1]$ with $\R - (0,1) = \varphi_{i,j}^{-1}(0)$ and $[1/4,3/4] \subset \varphi_{i,j}^{-1}(1)$ and a smooth bump function $\psi_{i,j} \colon \R \to [0,1]$ with $\R - (-1,1) = \varphi_{i,j}^{-1}(0)$ and $[-3/4,3/4] \subset \psi_{i,j}^{-1}(1)$, and $\widetilde{H} \colon S_{\mathrm{unbd}} \to \R$ by $\widetilde{H} = H$ on $S - (\Sv_\infty \cup \overline{U_{i,j}})$ and $\widetilde{H}(x,y) := (1 - \varphi_{i,j}(x)\psi_{i,j}(y)) H(y) + \varphi_{i,j}(x)\psi_{i,j}(y) g(y)$ on $U_{i,j} = (0,1) \times (-1, 1)$. 
By definition, the function $\widetilde{H}$ is smooth such that $\widetilde{H}|_{U_{i,j}}(x, \cdot)$ for $x \in (0,1)$ is strictly increasing as in the middle of Figure~\ref{flowbox_oij_ham}. 
Therefore the Hamiltonian flow generated by $\widetilde{H}$ is topologically equivalent to $v|_{S_{\mathrm{unbd}}}$. 
\end{proof}

In the proof of the previous lemma, the finiteness of singular points is essential (see Proposition~\ref{prop:ex} for details). 

\subsection{Proof of Theorem~\ref{main:01}}
Let $v$ be a flow with sectored ends and finitely many singular points on an orientable surface $S$ with finitely many boundary components, finite genus, and finite ends. 
Suppose that $v$ is a flow of weakly finite type, the restriction $S/v_{\mathrm{ex}}$ is a finite directed graph without directed cycles.  
Since each end is sectored, the flow $v_{\mathrm{end}}$ is also of weakly finite type. 
From Lemma~\ref{lem:02}, assertion {\rm(1)} holds. 
By Lemma~\ref{lem:bd1}, the invariant subset $\mathop{\mathrm{BD}_+(v_{\mathrm{end}})}$ is the finite union of singular points and border separatrices. 
Since border separatrices of $v_{\mathrm{end}}$ are virtually border separatrices of $v$, the invariant subset $\mathop{\mathrm{BD}_+(v)}$ is the finite union of singular points and virtually border separatrices, and is the union of the vertices of the finite directed graph $S/v_{\mathrm{ex}}$. 
From Lemma~\ref{lem:02}, any connected components of the complement $S_{\mathrm{end}} - \mathop{\mathrm{BD}}_+(v_{\mathrm{end}}) = S - \mathop{\mathrm{BD}}_+(v)$ are either invariant open trivial flow boxes or invariant open periodic annuli. 
Therefore each non-Hausdorff point in $S/v$ is either a multi-saddle or a virtually border separatrix. 
Since the Reeb graph of the Hamiltonian which generates a flow which is topological equivalent to $v$ is a quotient space of the orbit space $S/v$, by identifying non-Hausdorff points of the Reeb graph, the resulting space is the finite directed graph $S/v_{\mathrm{ex}}$. 

Conversely, suppose that $v$ is Hamiltonian. 
By Lemma~\ref{lem:char_isolated_sing}, any singular points on $S$ are either multi-saddles or centers.
Since each end is sectored, the singular point set $\mathop{\mathrm{Sing}}(v_{\mathrm{end}})$ is finite and so any singular points are finitely sectored. 
Since the existence of limit circuits implies the existence of closed transversals outside of the limit circuits, by a generalization of the Poincar\'e--Bendixson theorem to $v_{\mathrm{end}}$, the $\omega$-limit set and $\alpha$-limit set of a non-recurrent point with respect to $v_{\mathrm{end}}$ are singular points. 
%
%
Then the flow $v_{\mathrm{end}}$ is of weakly finite type. 
By Lemma~\ref{lem:02}, assertion {\rm(2)} holds.


%


%

\subsection{Proof of Corollary~\ref{main_cor:01}}

Let $v$ be a Hamiltonian flow with sectored ends and finitely many singular points on an orientable surface $S$ with finitely many boundary components, finite genus, and finite ends and $\mathop{\mathrm{BD}}_+(v)$ the finite union of singular points and virtually border separatrices because of Lemma~\ref{lem:bd1}. 
Theorem~\ref{main:01} implies that the complement $S - \mathop{\mathrm{BD}}_+(v)$ consists of invariant annuli and invariant disks. 
%
This means that we can reconstruct $v$ from the finite directed surface graph $\mathop{\mathrm{BD}}_+(v)$ by filling invariant periodic annuli (resp. invariant trivial blow boxes) into invariant annuli (resp. invariant disks). 

\subsection{Proof of Theorem~\ref{main:02}}

Let $v$ be a flow with finitely many singular points on an orientable surface $S$ with finitely many boundary components, finite genus, and finite ends. 
By the finite existence of genus and ends, we may assume that $S$ is connected. 

Suppose that $v_{\mathrm{end}}$ is a Hamiltonian flow on the compact surface $S_{\mathrm{end}}$.   
By \cite[Theorem 3]{cobo2010flows}, any singular points are centers or multi-saddles.
Since the compact surface has a finite volume and $S$ is an open subset of $S_{\mathrm{end}}$, the flow $v$ is a Hamiltonian flow on $S$ with finite volume. 

Conversely, suppose that the flow $v$ is a Hamiltonian flow on $S$ with finite volume. 
By the finite existence of genus and ends, the end completion $S_{\mathrm{end}}$ is a compact surface and the resulting flow $v_{\mathrm{end}}$ is a flow with finitely many singular points on $S_{\mathrm{end}}$. 

We claim that $\Pv$ is open. 
Indeed, applying a generalization of the Poincar\'e--Bendixson theorem to $v_{\mathrm{end}}$, the $\omega$-limit set and $\alpha$-limit set of a non-recurrent point with respect to $v_{\mathrm{end}}$ are singular points. 
Fix a periodic point $O$. 
By the flow box theorem for a continuous flow on a compact surface (cf. Theorem 1.1, p.45\cite{aranson1996introduction}), there are finitely many trivial flow boxes whose union is a \nbd of $O$. 
The non-existence of limit cycles and the orientability imply there is a transverse arc containing a point of $O$ whose saturation is a periodic annulus and a \nbd of $O$.

Then the complement $S - \Pv = \Sv \sqcup \mathrm{P}(v)$ is closed. 
We claim that $\mathrm{int} \mathrm{P}(v) = \emptyset$. 
Indeed, assume that $\mathrm{int} \mathrm{P}(v) \neq \emptyset$. 
Fix a point $x \in \mathrm{int} \mathrm{P}(v)$. 
From the finite volume property, any open \nbd of $x$ is not a wandering domain. 
Fix a small open transverse arc $I \subset \mathrm{int} \mathrm{P}(v)$ containing $x$ and a positive number $T >0$ such that $v_T(I) \cap I \neq \emptyset$. 
The existence of the Hamiltonian implies that any points in $v_T(I) \cap I \neq \emptyset$ are periodic, which contradicts that $I \subset \mathrm{int} \mathrm{P}(v)$. 

The finite existence of singular points implies that $S = \overline{\Pv}$. 
By $S_{\mathrm{end}} - \mathop{\mathrm{Sing}}(v_{\mathrm{end}}) = S - \Sv$, we obtain that $S_{\mathrm{end}} = \overline{\mathop{\mathrm{Per}}(v_{\mathrm{end}})}^{S_{\mathrm{end}}}$ and so that $v_{\mathrm{end}}$ is a non-wandering flow with finitely many singular points. 
By \cite[Theorem 3]{cobo2010flows}, any singular points of $v_{\mathrm{end}}$ are centers or multi-saddles. 
This implies that $\mathrm{P}(v) = \mathrm{P}(v_{\mathrm{end}})$ consists of finitely many orbits. 
Since any non-Hausdorff points in the orbit space $S/v$ are separatrices of a multi-saddle, we obtain that $S/v_{\mathrm{ex}}$ is homeomorphic to $S_{\mathrm{end}}/v_{\mathrm{end}}$. 
Applying Theorem~\ref{main:01} to $v$, the extended orbit space $S/v_{\mathrm{ex}}$ is a finite directed graph without directed cycles and so is $S_{\mathrm{end}}/v_{\mathrm{end}}$.  
Since $(S_{\mathrm{end}})_{\mathrm{unbd}} = S_{\mathrm{end}}$, applying Lemma~\ref{lem:02} to $v_{\mathrm{end}}$, we have that $v_{\mathrm{end}}$ is a Hamiltonian flow on the compact surface $S_{\mathrm{end}}$. 

%

\subsection{Characterization of a Hamiltonian flow with sectored ends}

In this subsection, we have the following characterization of a Hamiltonian flow with sectored ends and finitely many singular points on an orientable surface of ``compact type''. 

\begin{theorem}\label{thm:02}
A flow $v$ with finitely many singular points on an orientable surface $S$ with finitely many boundary components, finite genus, and finite ends is a Hamiltonian flow with sectored ends if and only if the flow $v$ satisfies the following four conditions: 
\\
{\rm(1)} The flow $v$ is a flow of weakly finite type. 
\\
{\rm(2)} The extended orbit space $S/v_{\mathrm{ex}}$ is a finite directed graph without directed cycles. 
\\
{\rm(3)} The complement $S - \mathop{\mathrm{BD}_+(v)}$ consists of finitely many invariant open subsets which are periodic annuli and trivial flow boxes. 
\\
{\rm(4)} Any non-Hausdorff points are contained in virtually border separatrices. 
\end{theorem}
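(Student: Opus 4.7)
For the forward direction, assume $v$ is a Hamiltonian flow with sectored ends and finitely many singular points. Theorem~\ref{main:01} directly yields conditions (1) and (2). Condition (3) follows from part (c) of Theorem~\ref{main:01} combined with Lemma~\ref{lem:bd1}: each edge of $S/v_{\mathrm{ex}}$ corresponds to a connected component of $S - \mathop{\mathrm{BD}_+(v)}$, realized as an invariant periodic annulus or invariant trivial flow box, and there are only finitely many such components because $\mathop{\mathrm{BD}_+(v)}$ is a finite union of orbits. Condition (4) follows from part (a) of Theorem~\ref{main:01}: every non-Hausdorff point of $S/v$ is a multi-saddle or a virtually border separatrix, and since multi-saddles are singular, any non-singular non-Hausdorff point must lie on a virtually border separatrix.

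For the converse, the essential new task is to verify that conditions (1)--(4) force the sectored ends property; once that is established, Theorem~\ref{main:01} will show $v$ is Hamiltonian. Fix an end $e \in S_{\mathrm{end}} - S$ and choose an open-disk neighborhood $U_e \subset S_{\mathrm{end}}$ of $e$ that contains no singular points of $v$ and meets only finitely many orbits of $\mathop{\mathrm{BD}_+(v)}$, which is possible because by (1) the set $\mathop{\mathrm{BD}_+(v)}$ is a finite union of orbits. By (3), the complement $S - \mathop{\mathrm{BD}_+(v)}$ has only finitely many connected components, each an invariant periodic annulus or invariant trivial flow box, so $U_e \cap S$ is a finite union of pieces obtained by intersecting $U_e$ with these components.

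Each such piece extends in $S_{\mathrm{end}}$ to a sector at $e$: a periodic annulus component contributes an elliptic sector (or, if it accounts for the whole neighborhood, makes $e$ a center), while a trivial flow box contributes a parabolic or hyperbolic sector depending on how its transverse boundary components approach $e$. Condition (4) is then used to ensure that the orbits in $\mathop{\mathrm{BD}_+(v)} \cap U_e$ separating adjacent pieces are virtually border separatrices rather than wild non-Hausdorff orbits, so that the pieces glue consistently into a finite sector decomposition of $U_e$ around $e$. This shows $e$ is a finitely sectored singular point of $v_{\mathrm{end}}$, and hence $v$ has sectored ends.

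With the sectored ends property in hand, $v$ is a flow with sectored ends and finitely many singular points satisfying the hypotheses of Theorem~\ref{main:01}; the implication (2) $\Rightarrow$ (1) of that theorem then gives that $v$ is Hamiltonian. The step I expect to be the main obstacle is the middle argument: carefully verifying that the finite collection of pieces coming from (3) actually assembles into a genuine finitely sectored open-disk neighborhood of each end, with the correct cyclic arrangement of sectors and the property that each pair of distinct sectors meets in at most two orbit arcs. This is where condition (4) is used in a delicate but essential way, since without it the separating orbits between adjacent pieces could fail to provide honest sector boundaries.
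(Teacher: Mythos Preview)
Your overall architecture matches the paper's: the forward direction comes from Theorem~\ref{main:01}, and the converse reduces to proving that conditions (1)--(4) force sectored ends, after which Theorem~\ref{main:01} applies again. In the paper this reduction is packaged as Lemma~\ref{lem:condition02}, preceded by Lemma~\ref{lem:condition01}.

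There are two gaps in your converse sketch. The minor one: you write that ``by (1) the set $\mathop{\mathrm{BD}_+(v)}$ is a finite union of orbits,'' but weakly finite type alone does not bound the number of virtually border separatrices. The paper needs condition (3) as well and proves this separately (Lemma~\ref{lem:condition01}), via a pigeonhole argument on the finitely many components of $S - \mathop{\mathrm{BD}_+(v)}$.

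The substantive gap is in how you use condition (4). You invoke (4) to control the orbits in $\mathop{\mathrm{BD}_+(v)}$ that separate adjacent pieces, but those orbits are virtually border separatrices essentially by definition of $\mathop{\mathrm{BD}_+(v)}$; that is not where (4) does work. The real obstruction is \emph{internal} to a single trivial flow box component $U$ of $S - \mathop{\mathrm{BD}_+(v)}$: a priori, different orbits in $U$ could have different $\alpha$-limits (or $\omega$-limits) in $S_{\mathrm{end}}$, in which case $U$ would not contribute a single sector at any end. The paper's Lemma~\ref{lem:condition02} rules this out by contradiction: if two orbits in $U$ had distinct $\alpha$-limits, one finds a point $z \in U$ on the frontier between the two behaviors and constructs a non-Hausdorff partner $w$ for it, so that $z$ is a non-Hausdorff point lying in $U \subset S - \mathop{\mathrm{BD}_+(v)}$, contradicting (4). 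Only after this step can one say that each flow box component determines well-defined sectors at the ends. Your sentence ``a trivial flow box contributes a parabolic or hyperbolic sector'' presupposes exactly this conclusion without argument.
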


The author would like to know whether condition {\rm(4)} in the previous theorem is redundant or not. 
We have the following statements to demonstrate the above characterization. 

\begin{lemma}\label{lem:condition01}
Let $v$ be a flow of weakly finite types on an orientable surface $S$ with finitely many boundary components, finite genus, and finite ends. 
If $S - \mathop{\mathrm{BD}_+(v)}$ consists of finitely many invariant open subsets which are periodic annuli and trivial flow boxes, then $\mathop{\mathrm{BD}_+(v)}$ consists of finitely many orbits. 
\end{lemma}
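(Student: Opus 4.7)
The plan is to bound each of the four types of orbit making up $\mathop{\mathrm{BD}_+(v)}$ --- singular points, limit cycles, one-sided periodic orbits, and virtually border separatrices --- and conclude their union is finite. The weakly-finite-type hypothesis immediately gives that $\mathop{\mathrm{Sing}}(v)$ is finite and that there are finitely many limit cycles, so the task reduces to the last two classes.

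For one-sided periodic orbits: every such orbit $O$ is by definition bordered on one side by a periodic annulus, which must lie in one of the finitely many periodic annulus components of $S - \mathop{\mathrm{BD}_+(v)}$. Each such periodic annulus component is parameterized by an open planar annulus with exactly two boundary circles, so it contributes at most two one-sided periodic orbits, one for each boundary side. The total is therefore at most $2 n_{p}$, where $n_{p}$ is the number of periodic annulus components.

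For virtually border separatrices: each such orbit is, by definition, a border separatrix of $v_{\mathrm{end}}$ at some singular point of $v_{\mathrm{end}}$. The singular points of $v_{\mathrm{end}}$ are $\mathop{\mathrm{Sing}}(v)$ (finite) together with the ends of $S$ (finite by hypothesis). At each finitely sectored point of $\mathop{\mathrm{Sing}}(v)$ only finitely many border separatrices are incident. The crucial case is to show that at each end $e$ only finitely many border separatrices of $v_{\mathrm{end}}$ accumulate, equivalently that $e$ is finitely sectored in $v_{\mathrm{end}}$. For this I would note that each sector at $e$ has non-empty $2$-dimensional interior and that $\mathop{\mathrm{BD}_+(v)}$ has empty interior in $S$, so each sector must meet some component $C_j$ of $S - \mathop{\mathrm{BD}_+(v)}$, and then argue that each individual $C_j$ can populate only boundedly many sectors at $e$: for an elliptic sector $S_k$, its boundary $\partial(\mathrm{int}(S_k))$ is contained in $\mathop{\mathrm{BD}_+(v)} \cup \{ e \}$ and hence disjoint from every $C_j$, so the connectedness of $C_j$ forces $C_j \subseteq \mathrm{int}(S_k)$ whenever they meet, and pairwise disjointness of distinct sector interiors then forbids $C_j$ from meeting more than one such sector; for hyperbolic and parabolic sectors, a similar but more delicate count via the parameterizing square (respectively annulus) of $C_j$ bounds the number of sectors any single $C_j$ can populate. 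Pigeonhole then bounds the total number of sectors at $e$ by a finite multiple of the number of components, yielding the required finiteness.

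The main obstacle is making the ``bounded number of sectors per $C_j$'' count rigorous for hyperbolic and parabolic sectors, where the sector interior is not $v$-invariant and the clean connectedness trick used for elliptic sectors fails. I expect this will require a careful analysis of how the four sides of the parameterizing square of a trivial flow box, or the two boundary circles of the parameterizing annulus of a periodic annulus, can accumulate at $e$ inside the compact surface $S_{\mathrm{end}}$; compactness of $\overline{C_j}$ in $S_{\mathrm{end}}$ together with the simple combinatorial structure of these parameterizing domains should rule out infinitely many distinct accumulation branches at a single end. Once this step is in hand, combining the finite counts on each of the four types yields that $\mathop{\mathrm{BD}_+(v)}$ is a finite union of orbits.
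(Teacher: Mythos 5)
Your reduction to counting the virtually border separatrices is the right first move and matches the paper's opening step (singular points, limit cycles, and one-sided periodic orbits are disposed of just as quickly there). But the heart of the lemma is exactly the step you defer: showing that a single component $C_j$ of $S - \mathop{\mathrm{BD}_+(v)}$ cannot contribute infinitely many border separatrices accumulating at one end. Your connectedness trick handles elliptic sectors, but for the hyperbolic and parabolic cases you only write that compactness of $\overline{C_j}$ in $S_{\mathrm{end}}$ ``should rule out infinitely many distinct accumulation branches,'' and that is precisely the nontrivial configuration that must be excluded: a priori a single trivial flow box $U$ could have infinitely many pairwise disjoint virtually border separatrices in its boundary, all running between the same two ends. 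Nothing in the parameterization of $U$ by $(0,1)^2$ by itself prevents $\partial_{S_{\mathrm{end}}} U$ from being an infinite union of orbits, so as written the proposal has a genuine gap at the decisive point.

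For comparison, the paper closes this gap with a concrete planar-topology argument rather than a per-sector count. Assuming infinitely many pairwise disjoint orbits $O_i \subseteq \mathop{\mathrm{BD}_+(v)}$, pigeonhole over the finitely many singular points, ends, and components reduces to infinitely many virtually border separatrices $O_i$ with a common $\alpha$-end $\alpha$ and common $\omega$-end $\omega$, all contained in $\partial U_1$ for a single component $U_1$. The simple closed curves $\mu_i = \{\alpha,\omega\} \sqcup O_i \sqcup O_{i+1}$ in $S_{\mathrm{end}}$ then bound (after renumbering, using finite genus and finitely many ends) pairwise disjoint open disks $B_1, B_2, B_3$; since $U_1$ is connected and disjoint from every $O_i$ while $O_2, O_3 \subseteq \partial U_1$, it is forced into $B_1 \sqcup B_2 \sqcup B_3$, which is incompatible with $O_1$ and $O_4$ both lying in $\partial U_1$. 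To salvage your route you would need an argument of this strength for the ``boundedly many sectors per component at an end'' claim; the appeal to the combinatorial structure of the parameterizing square does not yet supply it.
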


\begin{proof}
Suppose that $S - \mathop{\mathrm{BD}_+(v)}$ consists of finitely many invariant open subsets which are periodic annuli and trivial flow boxes. 
Let $U_1, \ldots , U_k$ be the connected components of $S - \mathop{\mathrm{BD}_+(v)}$. 
By definition, the invariant subset $\mathop{\mathrm{BD}_+(v)}$ is the union of singular points, limit cycles, one-sided periodic orbits, and virtually border separatrices. 
Assume that there are infinitely many pairwise disjoint orbits $O_i \subseteq \mathop{\mathrm{BD}_+(v)}$ for $i \in \Z_{\geq 0}$. 
Since $v$ is of weakly finite type and $S$ has finitely many boundary components, finite genus, and finite ends, we have the finite existence of singular points, limit cycles, and one-sided periodic orbits. 
Therefore we may assume that any orbits $O_i$ are virtually border separatrices. 
By the finite existence of finitely sectored singular points and ends, we may assume that there are ends $\alpha, \omega \in S_{\mathrm{end}} - S$ such that $\alpha_{v_{\mathrm{end}}}(O_i) = \alpha$ and $\omega_{v_{\mathrm{end}}}(O_i) = \omega$ for any $i \in \Z_{\geq 0}$. 
Since each border separatrix is contained in the boundary component of a sector which is either hyperbolic, elliptic, or parabolic, for any $i \in \Z_{\geq 0}$, there is an invariant subset $U_j$ whose boundary contains $O_i$. 
Then $\bigsqcup_{i \in \Z_{\geq 0}} O_i \subseteq \bigcup_{j=1}^k \partial U_j$. 
By renumbering, we may assume that $\bigsqcup_{i \in \Z_{\geq 0}} O_i \subseteq \partial U_1$. 
Then $\alpha, \omega \in \partial_{S_{\mathrm{end}}} U_1$. 
Put $\mu_i := \{ \alpha, \omega \} \sqcup O_i \sqcup O_{i+1}$. 
Then the unions $\mu_i$ are simple closed curves. 
From the finite existence of genus and ends, by renumbering, we may assume that any simple closed curves $\mu_i$ bound open disks. 
By renumbering, we may assume that the open disks $B_1$, $B_2$, $B_3$ are pairwise disjoint. 
Then the union $B_1 \sqcup O_2 \sqcup B_2 \sqcup O_3 \sqcup B_3$ is an open invariant disk. 
Since $O_2, O_3 \subseteq \partial U_1$ and $U_1 \cap \bigcup_{i=1}^\infty O_i = \emptyset$, the connectivity of $U_1$ implies that $U_1 \subset B_1 \sqcup B_2 \sqcup B_3$. 
Therefore $O_1 \sqcup O_4 \not\subseteq U_1$, which contradicts $\bigsqcup_{i \in \Z_{\geq 0}} O_i \subseteq \partial U_1$. 
\end{proof}

\begin{lemma}\label{lem:condition02}
Let $v$ be a flow of weakly finite type on an orientable surface $S$ with finitely many boundary components, finite genus, and finite ends. 
Suppose that any non-Hausdorff points are contained in virtually border separatrices, the extended orbit space $S/v_{\mathrm{ex}}$ is a finite directed graph without directed cycles, and that $S - \mathop{\mathrm{BD}_+(v)}$ consists of finitely many invariant open subsets which are periodic annuli and trivial flow boxes. 
Then the ends are sectored. 
\end{lemma}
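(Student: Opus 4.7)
The plan is to show that assumption~(3), together with Lemma~\ref{lem:condition01}, makes the dynamics in a small neighborhood of any end $e \in S_{\mathrm{end}} - S$ purely combinatorial, and to read off a sectored decomposition at $e$ from this combinatorics. First I would apply Lemma~\ref{lem:condition01} (whose hypotheses hold here) to conclude that $\mathop{\mathrm{BD}_+(v)}$ consists of finitely many orbits. Denote the finitely many components of $S - \mathop{\mathrm{BD}_+(v)}$ by $U_1, \ldots, U_k$, each either a periodic annulus or a trivial flow box by hypothesis~(3). Fix an end $e \in S_{\mathrm{end}} - S$ and choose a small open disk neighborhood $D$ of $e$ in $S_{\mathrm{end}}$ that contains no other point of $(S_{\mathrm{end}} - S) \cup \mathop{\mathrm{Sing}}(v)$. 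Since only finitely many $U_i$ have $e$ in their $S_{\mathrm{end}}$-closure, and only finitely many orbits of $\mathop{\mathrm{BD}_+(v)}$ have $e$ as an $\alpha$- or $\omega$-limit under $v_{\mathrm{end}}$, we may shrink $D$ so that it meets exactly these finitely many objects.

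Next I would split the punctured disk $D \setminus \{e\}$ along the orbits of $\mathop{\mathrm{BD}_+(v)}$ that pass through $D$. Each resulting piece lies in a single $U_i$, and I classify it by the behavior of its orbits with respect to $v_{\mathrm{end}}$. A piece inside a periodic annulus whose orbits in $D$ are closed and bound disks around $e$ in $S_{\mathrm{end}}$ yields an elliptic sector, or else witnesses $e$ as a center if the annulus fills a full neighborhood of $e$. A piece inside a trivial flow box whose orbits have exactly one $v_{\mathrm{end}}$-endpoint at $e$ is a parabolic sector; with both endpoints at $e$ it is an elliptic sector; and with neither endpoint at $e$, so that $e$ only touches the lateral boundary of the box, it is a hyperbolic sector. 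Since the pieces and separating orbits are finite in number and meet along orbit arcs from $\mathop{\mathrm{BD}_+(v)}$, this exhibits $e$ as a finitely sectored singular point of $v_{\mathrm{end}}$.

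The main obstacle is controlling the geometry of each intersection $U_i \cap D$: a priori, a single $U_i$ could meet $D$ in many components or approach $e$ in a spiraling way with infinitely many turns, and adjacent pieces could share more than two orbit arcs. To rule this out, I would invoke the weakly-finite-type assumption and the absence of non-closed recurrent orbits to prevent an orbit from re-entering a sufficiently small $D$ after leaving it, and the acyclicity of the finite directed graph $S/v_{\mathrm{ex}}$ to forbid configurations in which pieces wrap around $e$ indefinitely. Assumption~(4) enters to ensure that distinct sectorial pieces share well-defined virtually border separatrices as their common boundary orbits: any orbit on the common boundary of two pieces is necessarily part of $\mathop{\mathrm{BD}_+(v)}$, and the virtually-border-separatrix hypothesis on non-Hausdorff points guarantees that no two pieces are glued along an uncountable family of boundary orbits, which finally yields the required "at most two orbit arc" intersection condition in the definition of a finitely sectored singular point.
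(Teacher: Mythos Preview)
Your outline---use Lemma~\ref{lem:condition01} for finiteness, then analyze a small punctured disk around each end---has the right shape, but there is a genuine gap precisely at the point you call ``the main obstacle,'' and you misidentify where assumption~(4) actually does its work. The decisive step is to show that \emph{within a single trivial-flow-box component} $U_i$ of $S - \mathop{\mathrm{BD}_+(v)}$, every orbit has the same $\alpha$-limit and the same $\omega$-limit under $v_{\mathrm{end}}$. Without this, your classification of a ``piece'' as parabolic, elliptic, or hyperbolic is ill-posed: a connected component of $U_i \cap (D\setminus\{e\})$ could contain some orbits whose $\alpha$-limit is $e$ and others whose $\alpha$-limit is a different end $e'$, and such a region is not a sector of any type. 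Neither the weakly-finite-type hypothesis nor the acyclicity of $S/v_{\mathrm{ex}}$ excludes this, and shrinking $D$ does not help. Your proposed use of~(4)---to control how adjacent pieces are glued---is unnecessary: the boundaries between pieces already lie in $\mathop{\mathrm{BD}_+(v)}$, which is finite by Lemma~\ref{lem:condition01}.

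The paper uses~(4) exactly to force this uniformity of limits inside each $U_i$, by contradiction. If two orbits in $U_i$ have distinct $\alpha$-limits under $v_{\mathrm{end}}$, choose $z\in U_i$ on the frontier of the set where the $\alpha$-limit jumps, take a short transverse arc $T$ at $z$, and pick a monotone sequence $z'_n\to z$ along $T$ whose orbits all share the ``other'' $\alpha$-limit $\alpha'$. The simple closed curves $\gamma_n=\{\alpha'\}\sqcup O(z_n)\sqcup O(z_{n+1})\sqcup T(z'_n,z'_{n+1})$ bound disks whose union $U_\infty$ has, in its $v_{\mathrm{end}}$-boundary, a non-singular point $w$ not on $O(z)$, $O(z_1)$, or $T$. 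Then $(z,w)$ is a non-Hausdorff pair with $z\in U_i\subset S-\mathop{\mathrm{BD}_+(v)}$, contradicting the hypothesis that all non-Hausdorff points lie in virtually border separatrices. Once every $U_i$ has constant $\alpha$- and $\omega$-limits, each $U_i$ contributes at most one sector at each end, and the finitely many $U_i$ and boundary orbits in $\mathop{\mathrm{BD}_+(v)}$ assemble into a finitely sectored structure.
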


\begin{proof}
Lemma~\ref{lem:condition01} implies that $\mathop{\mathrm{BD}_+(v)}$ consists of finitely many orbits. 
The finiteness of genus and ends implies that $\mathop{\mathrm{BD}_+(v_{\mathrm{end}})}$ consists of finitely many orbits. 
By definition, the invariant subset $\mathop{\mathrm{BD}_+(v_{\mathrm{end}})}$ is a closed subset that contains all virtually border separatrices and which consists of singular points, limit cycles, one-sided periodic orbits, and virtually border separatrices. 
Therefore there are at most finitely many virtually border separatrices. 
Since any non-Hausdorff points are contained in virtually border separatrices, any extended orbit is the finite union of orbits and any non-Hausdorff points are contained in $\mathop{\mathrm{BD}_+(v)}$. 
Because $S/v_{\mathrm{ex}}$ is a finite directed graph without directed cycles, there are no closed transversals of $v$ and the extended orbit space $S/v_{\mathrm{ex}}$ is Hausdorff. 
Since any extended orbits are the inverse images of the canonical quotient map $S \to S/v_{\mathrm{ex}}$, any extended orbits are closed. 
Because the non-existence of closed transversals outside of the limit circuits implies the non-existence of limit circuits, from the non-existence of non-closed recurrent points, by a generalization of the Poincar\'e--Bendixson theorem to $v_{\mathrm{end}}$, the $\omega$-limit set and $\alpha$-limit set of a non-recurrent point with respect to $v_{\mathrm{end}}$ are singular points. 
This implies that any extended orbits of non-periodic points are finite graphs. 
Then $\mathop{\mathrm{BD}_+(v_{\mathrm{end}})}$ is a finite union of singular points, one-sided periodic orbits, and virtually border separatrices, and each connected component of $\mathop{\mathrm{BD}_+(v_{\mathrm{end}})}$ is either a periodic orbit or a finite graph. 
Fix a trivial flow box $U$ which is a connected component of $S - \mathop{\mathrm{BD}_+(v)} = S_{\mathrm{end}} - \mathop{\mathrm{BD}_+(v_{\mathrm{end}})}$. 
The non-existence of limit cycles implies that $\partial_{v_{\mathrm{end}}} U \subseteq \mathop{\mathrm{BD}_+(v)}$ is a finite circuit consisting of singular points and virtually border separatrices. 

We claim that there are points $\alpha, \omega \in S_{\mathrm{end}} - S$ such that $\alpha_{v_{\mathrm{end}}}(y) = \alpha$ and $\omega_{v_{\mathrm{end}}}(y) = \omega$ for any $y \in U$. 
Indeed, assume that there are distinct point $y, y' \in U$ such that $\alpha_{v_{\mathrm{end}}}(y) \neq \alpha_{v_{\mathrm{end}}}(y')$. 
Fix a point $z \in U \cap \partial \{ y'' \in U \mid \alpha_{v_{\mathrm{end}}}(y) \neq \alpha_{v_{\mathrm{end}}}(y'') \}$. 
Since $\partial_{v_{\mathrm{end}}} U \subseteq \mathop{\mathrm{BD}_+(v)}$ consists of finitely many orbits, there is a convergence sequence $(z_i)_{i \in \Z_{>0}}$ to $z$ with $\alpha_{v_{\mathrm{end}}} (z) \neq \alpha_{v_{\mathrm{end}}}(z_i) = \alpha_{v_{\mathrm{end}}}(z_j)$ for any $i,j \in \Z_{>0}$. 
Put $\alpha := \alpha_{v_{\mathrm{end}}}(z)$ and $\alpha' := \alpha_{v_{\mathrm{end}}}(z_i)$. 
Fix a closed transverse arc $T$ such that $z$ is the boundary point of $T$ and that $T$ intersects infinitely many orbits $O(z_i)$. 
Taking a subsequence of $(z_i)_{i \in \Z_{>0}}$, any orbits $O(z_i)$ intersects $T$ and the first intersections $z_i'$ of $O(z_i) \cap T$ forms a monotonic convergence sequence to $z$ as in Figure~\ref{non-Hausdorff_pt}. 
\begin{figure}[t]
\begin{center}
\includegraphics[scale=0.35]{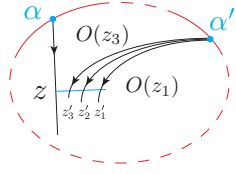}
\end{center}
\caption{A flow box $U$}
\label{non-Hausdorff_pt}
\end{figure}
Let $T_{a,b}$ be the open sub-arc of $T$ from a point $a \in T$ to a point $b \in T$. 
Denote by $\gamma_i$ the union of $\{ \alpha' \} \sqcup O(z_i) \sqcup O(z_{i+1}) \sqcup T(z_i', z_{i+1}')$.
Then $\gamma_i$ is a simple closed curve and let $B_i \subset U$ be the open disk in $U$ bounded by $\gamma_i$. 
The union $U_\infty := \bigsqcup_{i=1} B_i \sqcup O(z_i)$ is an open disk in $U$.
Since $\alpha$ and $\alpha'$ are distinct ends, the set difference $\partial_{v_{\mathrm{end}}} U_\infty - (\{ \alpha, \alpha' \} \sqcup O(z_1) \sqcup O(z))$ is not empty.  
The finiteness of singular points implies that $\partial_{v_{\mathrm{end}}} U_\infty - (\{ \alpha, \alpha' \} \sqcup O(z_1) \sqcup O(z) \sqcup T(z'_1, z))$ contains a non-singular point $w \in \overline{U} \subseteq S$. 
By construction, there is a convergence sequence $(w_i)_{i \in \Z_{>0}}$ to $w$ with $w_i \in O(z_i)$. 
This means that the pair of $z \in U \subseteq S - \mathop{\mathrm{BD}_+(v)}$ and $w$ is a non-Hausdorff pair. 
Since any non-Hausdorff points are contained in $\mathop{\mathrm{BD}_+(v)}$, we obtain $z \in \mathop{\mathrm{BD}_+(v)}$, contradicts $z \in S - \mathop{\mathrm{BD}_+(v)}$. 
By symmetry, we have that $\omega_{v_{\mathrm{end}}}(y) = \omega_{v_{\mathrm{end}}}(y')$ for any $y,y' \in U$.

Since $\mathop{\mathrm{BD}_+(v_{\mathrm{end}})}$ is a disjoint union of a finite graph and finitely many orbits periodic orbits, any connected components of $S - \mathop{\mathrm{BD}_+(v)} = S_{\mathrm{end}} - \mathop{\mathrm{BD}_+(v_{\mathrm{end}})}$ contribute at most finitely many hyperbolic or maximal elliptic or parabolic sectors. 
This means that any singular points in $S_{\mathrm{end}}$ are finitely sectored. 
Therefore the ends are sectored. 
\end{proof}

Theorem~\ref{main:01} and Lemma~\ref{lem:condition02} implies Theorem~\ref{thm:02}. 
%
%

\section{Examples}

We state the necessity of degeneracy and isolated properties of connected components of the singular point set. 

\subsection{Necessity of degeneracy of connected components of the singular point set}
Notice that the flow as in the left of Figure~\ref{fig:deg_sing01} is Hamiltonian such that every orbit is contained in a straight line $\R \times \{y \} \subset \R^2$ for some $y \in \R$. 
On the other hand, there is a non-Hamiltonian flow on the plane $\R^2$ whose singular point set is a closed interval such that every orbit is contained in a straight line $\R \times \{y \} \subset \R^2$ for some $y \in \R$. 
Indeed, let $X := (1,0)$ be a vector field and a smooth function $f \colon \R^2 \to [0,1]$ such that $f^{-1}(0) = \{0 \} \times [-1/2,1/2]$ and $\R^2 - (-3/4,3/4)^2 \subset f^{-1}(1)$. 
The flow $v$ generated by the vector field $fX = (f, 0)$ satisfies that the singular point set $\Sv$ is a closed interval such that every orbit is contained in a straight line $\R \times \{y \} \subset \R^2$ for some $y \in \R$ as in the middle of Figure~\ref{fig:deg_sing01}. 
\begin{figure}[t]
\begin{center}
\includegraphics[scale=0.4]{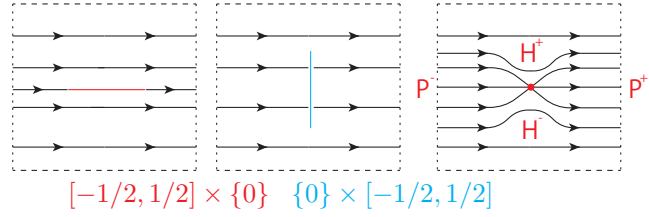}
\end{center}
\caption{A flow whose singular point set is a closed interval $\{0 \} \times [-1/2,1/2]$ and a flow with a singular point consisting of two parabolic sectors and two hyperbolic sectors.}
\label{fig:deg_sing01}
\end{figure}
We claim that $v$ is not Hamiltonian. 
Indeed, assume that $v$ is Hamiltonian. 
Then there is a Hamiltonian vector field $X_H$ whose generating flow is topologically equivalent to $v$ such that $X_H|_{[-1,1]^2 - (-3/4,3/4)^2} = (1,0)$. 
Let $H$ be the Hamiltonian of $X_H$. 
Consider a closed annulus $\A := [-1,1]/\sim \times  [-1,1]$, where the equivalence relation $\sim$ on $[-1,1]$ is defined by $x \sim y$ if either $x=y$ or $x = -y \in \{-1, 1\}$. 
Since $X_H|_{[-1,1]^2 - (-3/4,3/4)^2} = (1,0)$, the Hamiltonian $H$ induces the Hamiltonian $H_{\A}$ on $\A$. 
Therefore the Hamiltonian vector field$X_{H_{\A}}$ of $H_{\A}$ is also the induced vector field by $X_H$ and so has a wandering domain, which contradicts that Hamiltonian flows on compact surfaces are non-wandering. 

\subsection{Necessity of isolated properties of connected components of the singular point set}

Notice that the flow generated by a vector field $X =(1,0)$ is Hamiltonian such that every orbit is contained in a straight line $\R \times \{y \} \subset \R^2$ for some $y \in \R$. 
On the other hand, we have the following non-Hamiltonian flow on the plane whose singular point set is totally disconnected such that every orbit is contained in a straight line parallel to the $x$-axis. 

\begin{proposition}\label{prop:ex}
There is a non-Hamiltonian flow on the plane $\R^2$ generated by a vector field $fX$ with totally disconnected singular points, where $X =(1,0)$ is a Hamiltonian vector field and $f \colon \R^2 \to \R$ is a smooth function with totally disconnected zeros. 
In particular, every orbit is contained in a straight line $\R \times \{y \} \subset \R^2$ for some $y \in \R$. 
\end{proposition}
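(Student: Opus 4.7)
The plan is to exhibit a simple example where $f^{-1}(0)$ consists of a single point, which is trivially totally disconnected, and to derive non-Hamiltonianness directly from Lemma~\ref{lem:char_isolated_sing}. Concretely, I would take a smooth function $f \colon \R^2 \to [0,1]$ with $f \geq 0$, $f^{-1}(0) = \{(0,0)\}$, and $f \equiv 1$ outside a compact set, for instance $f(x,y) = \rho(x^2+y^2)$, where $\rho \colon [0,\infty) \to [0,1]$ is smooth and monotone with $\rho^{-1}(0) = \{0\}$ and $\rho \equiv 1$ on $[1,\infty)$. The bound $f \leq 1$ ensures that the vector field $fX = (f,0)$ is bounded, so the flow $v$ it generates on $\R^2$ is complete. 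By construction $\dot y \equiv 0$, so every orbit is contained in a horizontal line $\R \times \{y_0\}$, and $\Sv = f^{-1}(0) = \{(0,0)\}$ is a single point and hence (trivially) totally disconnected.

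Next I would show that the isolated singular point $(0,0)$ is neither a center nor a multi-saddle. It is not a center, because no orbit of $v$ is periodic: every orbit lies in a horizontal line and is either the singular point, a horizontal ray, or an entire horizontal line. It is not a multi-saddle either: in any small disk $D$ around the origin, the only orbits whose closure meets $(0,0)$ are the two separatrices $(-\infty,0) \times \{0\}$ and $(0,\infty) \times \{0\}$, while every other orbit near the origin is a horizontal arc crossing $D$ from its left transverse boundary to its right transverse boundary without accumulating on either separatrix. This rules out hyperbolic sectors at $(0,0)$, which by definition require interior orbits asymptotically approaching the two bounding separatrices; hence $(0,0)$ is not a multi-saddle.

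Finally, I would invoke Lemma~\ref{lem:char_isolated_sing}: if $v$ were Hamiltonian, the isolated singular point $(0,0)$ would be forced to be either a center or a multi-saddle, contradicting what was just shown. Therefore $v$ is not Hamiltonian, which gives the proposition. The main obstacle is verifying that $(0,0)$ carries no hyperbolic sector; the key observation is that the two separatrices on the $x$-axis are oriented in the same direction (one flowing into the origin and one flowing out of it), and nearby horizontal orbits stay parallel to them rather than accumulating on them, a configuration incompatible with the structure of a hyperbolic sector.
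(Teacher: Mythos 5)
Your example does not work: the flow generated by $(f,0)$ with $f\ge 0$ and $f^{-1}(0)=\{(0,0)\}$ is in fact Hamiltonian, so no contradiction can be derived from Lemma~\ref{lem:char_isolated_sing}. The error is in your claim that the origin carries no hyperbolic sector. Take the closed upper half-disk $D^+$. In polar coordinates the map $(r,\theta)\mapsto(-\log r,\,2\theta/\pi-1)$ sends $D^+\setminus\{0\}$ to a strip whose two boundary lines are the images of the two separatrices and whose interior leaves (the images of the chords $\R\times\{\epsilon\}$, which satisfy $r=\epsilon/\sin\theta\to\infty$ as $\theta\to 0,\pi$) are U-shaped curves opening away from the end corresponding to the origin; this is exactly a Reeb component with the point $(\infty,0)$ attached, i.e.\ a hyperbolic sector in the sense of the paper. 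Your intuition that nearby orbits ``stay parallel rather than accumulating'' on the separatrices is a metric picture, not a topological one: the chord at height $\epsilon$ lies within distance $\epsilon$ of the diameter, so as $\epsilon\to 0$ these orbits accumulate precisely on the union of the two separatrices and the singular point --- the same accumulation pattern as the hyperbolas $xy=c$ in a quadrant of a linear saddle. Hence the origin is a multi-saddle with two hyperbolic sectors, which Lemma~\ref{lem:char_isolated_sing} permits. Concretely, your flow is topologically equivalent to the flow of the Hamiltonian $H(x,y)=y(x^2+y^2)/(1+x^2+y^2)$: its only critical point is the origin, its zero level set is the $x$-axis (two separatrices flowing in the same direction through a two-pronged saddle), and every other level set is a single properly embedded curve, matching your phase portrait orbit for orbit. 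This is also consistent with the paper's Theorem~\ref{main:01}, which certifies such a flow as Hamiltonian.

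The deeper point, which the paper stresses right after Lemma~\ref{lem:02} (``the finiteness of singular points is essential''), is that no finite zero set of $f$ can produce a non-Hamiltonian example; the obstruction must come from infinitely many singular points. The paper's proof instead starts from the \S 4.1 example, where $f^{-1}(0)$ is the interval $\{0\}\times[-1/2,1/2]$ and non-Hamiltonianness follows from a global gluing argument: since the flow equals $(1,0)$ near the boundary of the square, a putative Hamiltonian would descend to a compact annulus on which the induced flow has a wandering domain, contradicting the non-wandering property of Hamiltonian flows on compact surfaces. It then replaces that interval of singular points by a totally disconnected (Cantor-like) set, via Theorem~D of \cite{yokoyama2022omega}, in such a way that the singular set still blocks the horizontal line $\R\times\{y\}$ for every $y$ in a whole interval of heights, so the same wandering-domain argument applies. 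If you want to repair your proposal you would need a zero set of this kind, not a single point; and the non-Hamiltonian mechanism is necessarily global (wandering/gluing), not a local sector count at one singular point.
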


\begin{proof}
Replacing the restriction of flows on the closed square $[-3/4,3/4]^2$ in the previous example with a trivial flow box and applying \cite[
Theorem~D]{yokoyama2022omega} to the trivial flow box, we obtain a flow on the plane $\R^2$ with totally disconnected singular points each whose orbits are contained in $\R \times \{ y\}$ for some $y \in \R$. 
As the same argument in the previous subsection, the resulting flow is not Hamiltonian. 
\end{proof}

\subsection{Necessity of non-existence of parabolic sectors}

A flow with a singular point consisting of two parabolic sectors and two hyperbolic sectors, as in the right in Figure~\ref{fig:deg_sing01}, is not Hamiltonian, which follows from the following observation. 

\begin{lemma}
Any flows with parabolic sectors on surfaces are not Hamiltonian. 
\end{lemma}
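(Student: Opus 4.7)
The plan is to show that a parabolic sector forces the Hamiltonian function to be constant on an open set of regular points, contradicting the non-vanishing of the Hamiltonian vector field there. Suppose for contradiction that $v$ is a flow on a surface $S$ with a parabolic sector $P$ at a singular point $p$, and that $v$ is Hamiltonian. By definition of a Hamiltonian flow, there is a homeomorphism $\phi \colon S \to S'$ mapping orbits of $v$ to orbits of a flow $w$ on an orientable surface $S'$ generated by a Hamiltonian vector field $X_H$, so that $dH = \omega(X_H, \cdot)$ for some $C^1$ function $H \colon S' \to \R$ and some volume form $\omega$ on $S'$. Since topological equivalence preserves sector types, $P' := \phi(P)$ is a parabolic sector of $w$ at $p' := \phi(p)$, and every point of $P'$ is a regular point of $w$.

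The key step is to extract the limit behavior from the topological model of a parabolic sector. Since $P'$ is topologically equivalent to a flow box with the point $(\pm \infty, 0)$, every orbit $O(x)$ contained in $P'$ has $p'$ in its closure: either $\omega(x) = \{p'\}$ (in the $P^+$ case) or $\alpha(x) = \{p'\}$ (in the $P^-$ case), depending on the orientation of the sector. Because $H$ is continuous on $S'$ and constant along each orbit of $w$ — indeed $dH(X_H) = \omega(X_H, X_H) = 0$ by antisymmetry of $\omega$ — continuity at the limit gives
\[
H(x) = \lim_{t \to \pm\infty} H(w_t(x)) = H(p')
\]
for every $x \in P'$. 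Hence $H \equiv H(p')$ on the open set $P'$.

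Consequently $dH$ vanishes identically on $P'$, and by non-degeneracy of the volume form $\omega$ the relation $dH = \omega(X_H, \cdot)$ forces $X_H \equiv 0$ on $P'$. This contradicts the fact that every point of $P'$ is a regular point of $w$. The only genuinely delicate step is extracting the uniform limit property of orbits in a parabolic sector from the topological model (the fact that all orbits of $P'$ share the common limit $p'$); once that is in hand, the contradiction is an immediate consequence of the continuity of $H$ and the non-degeneracy of $\omega$.
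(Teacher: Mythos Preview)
Your proof is correct and rests on the same observation as the paper's: every orbit arc in a parabolic sector accumulates on the singular point $p'$, so continuity of $H$ together with its invariance along orbits forces $H$ to be constant on the sector. The paper phrases the contradiction slightly differently---it picks two distinct orbits $O,O'$ in the sector, asserts $H(O)\neq H(O')$, and notes that $p'$ would then lie in the disjoint closed sets $H^{-1}(H(O))$ and $H^{-1}(H(O'))$---but this is the contrapositive of your argument: the claim $H(O)\neq H(O')$ is exactly what fails once $H$ is constant, and its implicit justification is precisely the non-degeneracy of $\omega$ that you invoke to get $X_H\neq 0$ at regular points. Your write-up makes that step explicit, which is arguably cleaner; one small imprecision is that $p'\in P'$ is singular, so ``every point of $P'$ is a regular point'' should read ``every point of $P'\setminus\{p'\}$'', and it suffices to know $p'\in\overline{O(x)}$ rather than the full statement $\omega(x)=\{p'\}$.
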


\begin{proof}
Assume that there is a Hamiltonian flow $v$ with parabolic sectors on a surface $S$.
Let $H$ be a Hamiltonian whose generating flow $v_H$ is topologically equivalent to $v$ and $x \in S$ the singular point for the parabolic sector. 
Fix two distinct non-singular orbits $O, O'$ of $v_H$ contained in a parabolic sector. 
Then the values $H(O)$ and $H(O')$ are distinct. 
Since any inverse image $H^{-1}(r)$ of any value $r \in \R$ is closed, we obtain $x \in H^{-1}(H(O)) \cap H^{-1}(H(O')) = \emptyset$, which is a contradiction. 
\end{proof}

Notice that the resulting flow from the flow as in the right in Figure~\ref{fig:deg_sing01} by removing the unique singular point is a Hamiltonian flow without singular points on an open annulus. 

\vspace{10pt}

{\bf Acknowledgement}: 
The author would like to thank Prof. Mitsuhiro Shishikura for proposing the problem in the finite volume case.

\bibliographystyle{abbrv}
\bibliography{yt20211011,../../bib/yt,../../bib/yokoyama}

\end{document}